\documentclass[11pt,reqno]{amsart}
\usepackage{xcolor}
\usepackage[T1]{fontenc}

\usepackage{amsmath,amssymb,amsthm}
\usepackage{esint}

\usepackage[a4paper,margin=2.8cm]{geometry}

\usepackage{mleftright} 
\mleftright

\usepackage[hidelinks,bookmarksdepth=3]{hyperref}

\usepackage[backend=biber,
style=alphabetic,
sorting=nyt,
maxbibnames=10,
maxcitenames=10,
giveninits=true,
doi=false,
url=false]{biblatex}

\DeclareFieldFormat[article]{journaltitle}{#1}
\DeclareFieldFormat[book]{title}{#1}

\renewbibmacro{in:}{%
  \ifentrytype{article}{}{\printtext{\bibstring{in}\intitlepunct}}}

\numberwithin{equation}{section}

\newcommand{\R}{\mathbb{R}}
\newcommand{\Z}{\mathbb{Z}}
\newcommand{\T}{\mathbb{T}}

\DeclareMathOperator{\diver}{div}

\theoremstyle{plain}
\newtheorem{thm}{Theorem}[section]

\newtheorem{lem}[thm]{Lemma}
\newtheorem{prop}[thm]{Proposition}

\theoremstyle{definition}
\newtheorem{defn}[thm]{Definition}

\theoremstyle{remark}
\newtheorem{rem}[thm]{Remark}

\begin{document}

\numberwithin{equation}{section}

\title[Osgood meets Ambrosio-DiPerna-Lions]
{Osgood meets Ambrosio-DiPerna-Lions}

\author[G. De Philippis]{Guido De Philippis}
\address{Department of Mathematics, University of Padova, Via Trieste 63, 35121 Padova, Italy}
\email{guido.dephilippis@math.unipd.it}

\author[L. Franchi]{Leonardo Franchi}
\address{Department of Pure Mathematics and Mathematical Statistics, Centre for Mathematical Sciences, Wilberforce Road, Cambridge CB3 0WA, United Kingdom}
\email{lf511@cam.ac.uk}

\begin{abstract}
In this note we extend the Ambrosio-DiPerna-Lions theory on the well-posedness of the transport equation to the case in which the vector field satisfies an \(L^p\) Osgood condition. In particular we show that bounded distributional solutions to the transport equation are unique and thus renormalized. As opposed to the classical proof, we do not rely on the vanishing of a commutator, but we show that a weighted energy built out of the Littlewood-Paley decomposition of the solution is conserved.
\end{abstract}

\maketitle
\section{Introduction}

\subsection{Background}
Let \(b: [0,T] \times \mathbb{T}^d \to \mathbb{R}^d\) be a divergence-free vector field:
\begin{equation}\label{eq:div}
 \diver b= 0.
\end{equation}
Classical Cauchy-Lipschitz theory ensures that for \(b\in L^1([0,T]; W^{1,\infty}(\mathbb{T}^d))\), there exists a unique solution to 
\begin{equation}\label{e:ode}\tag{ODE}
           \begin{cases}
   \partial_t X(t,x)=b(t,X(t,x))
   \\
   X(0,x)=x
 \end{cases}.
\end{equation}
Via the theory of characteristics, this is equivalent to the uniqueness of measure solutions\footnote{Note that thanks to~\eqref{eq:div}, equation~\eqref{tr} is equivalent to the continuity equation   \begin{equation}\label{cont}\tag{C}
\begin{cases}
 \partial_t u + \diver (bu) = 0, \\
 u(0, x) = u_0(x).
\end{cases}
\end{equation}
In particular it makes sense to speak about distributional solutions.} to the transport equation
  \begin{equation}\label{tr}\tag{T}
  \begin{cases}
 \partial_t u + b \cdot \nabla u = 0, \\
 u(0, x) = u_0(x).
\end{cases}
\end{equation}
In their seminal work~\cite{DiPernaLions1989}, DiPerna and Lions began to investigate what happens when the assumption \(\nabla b \in L^\infty\) is replaced by \(\nabla b \in L^p\), \(p\in (1,+\infty)\). In particular, they showed that under this assumption, there is still uniqueness of \emph{bounded} solutions of equation~\eqref{tr}.
Later, Ambrosio extended the theory to \(BV\) vector fields~\cite{Ambrosio2004}. He also introduced the concept of Regular Lagrangian Flow (RLF), which is, roughly speaking,  a  selection of solutions to~\eqref{e:ode}. In particular  he showed that, in general, the uniqueness of bounded solutions of~\eqref{tr} (or more precisely of~\eqref{cont}) implies  the existence of a unique Regular Lagrangian Flow.
Subsequently, Crippa and De Lellis~\cite{CrippaDeLellis2008} showed that the existence and uniqueness of a Regular Lagrangian Flow for Sobolev vector fields can be obtained directly via a Lagrangian approach. Note, however, that  uniqueness of the RLF is not known to imply uniqueness for solutions to the transport equation~\eqref{tr}, which  is a stronger requirement. 

\subsection{Main results}
It is a classical fact that uniqueness for~\eqref{e:ode} can also be obtained if  the Lipschitz assumption on \(b\)  is relaxed to an \emph{Osgood assumption}:
\begin{equation}\label{e:osgood}
 |b(t,x)-b(t,y)|\le s(t)\omega(|x-y|)
\end{equation}
where \(s\in L^1_t\) and \(\omega:\mathbb{R}_{\ge 0}\to \mathbb{R}_{\ge 0}\) is an \emph{Osgood modulus of continuity}, i.e., a continuous, increasing, concave function such that
\begin{equation}\label{osgcon}
\int_0^1 \frac{ds}{\omega(s)}=+\infty.
\end{equation}
The typical example is \(\omega(t)\sim t|\log t|\). Under  this assumption one can also show uniqueness of solutions to~\eqref{tr},~\cite{AmbrBern08,BahouriCheminDanchin2011}.

Note that~\eqref{e:osgood} can be thought of as saying that \(b\) has \(\omega\) derivatives in 
\(L^1_t(L_x^\infty)\). It is thus natural to investigate the extent to which the Ambrosio-DiPerna-Lions theory can be extended to \(\omega\) derivatives in \(L^p\).
A first attempt in this direction was made by Li and Luo~\cite{LiLuo2015}, where the Crippa-De Lellis Lagrangian approach was extended to the case where \(b\) satisfies an Osgood-Sobolev condition in the spirit of~\cite{Hajlasz1996}:
\begin{equation}\label{e:bosgoodinfinty}
|b(t,x)-b(t,y)|\le \omega(|x-y|)(g(t,x)+g(t,y))
\end{equation}
with  \(g\in L^1([0,T]; L^p(\mathbb{T}^d))\), and \(\omega\) an Osgood modulus.
Under this assumption,  the existence of a unique RLF solving~\eqref{e:ode} was shown. However, as already mentioned, this does not immediately imply uniqueness for solutions of the transport equation~\eqref{tr}.

In this paper, we directly study solutions of~\eqref{tr} and show that, under the same regularity assumption,  they are unique.

Since  the failure of the Osgood condition already implies non-uniqueness at the level of the ODE, in general one should not expect uniqueness of~\eqref{tr} when \(b\) has  ``non Osgood'' derivatives. For instance in~\cite{DiPernaLions1989}   uniqueness is shown to fail for vector fields in \(W^{s,1}\) and by  a careful tracking of the parameters, one can show that the vector field  constructed in~\cite{Depauw2003} belongs to \(L^1([0,1]; W^{s,p})\) for all \(s<1\) and  \(p<\frac{1}{s}\).

\begin{thm}\label{renormalizationosgood}
 Let \(p\in(1,\infty)\), let \(\omega\) be an Osgood modulus of continuity. Assume moreover that  there are constants \(C\) and \(\beta>0\) such that
\begin{equation}\label{omegabounds}
    \omega(\lambda s)\leq C\lambda^\beta \omega(s) \qquad \text{for every } s,\lambda\in(0,1].
\end{equation}
Let  \(b\in L^\infty([0,T]\times\T^d;\R^d)\) be divergence-free. Assume that
\[
\int_0^T \|b(t,\cdot)\|_{\dot M^{\omega,p}}dt<\infty\,,
\]
where for a measurable function \(f\) we have set
\[
\|f\|_{\dot M^{\omega,p}}
:=
\inf_g \|g\|_{L^p(\T^d)},
\]
and  the infimum is taken over all nonnegative \(g\in L^p(\T^d)\) such that
\[
|f(x)-f(y)|
\leq
\omega(|x-y|)\bigl(g(x)+g(y)\bigr).
\]
Then for every  bounded initial datum \(u_0\) there exists a unique bounded distributional solution \(u\) to  the transport equation~\eqref{tr}. In particular \(u\) is renormalized: \(g(u)\) is a solution for all \(g \in C^1(\mathbb{R};\mathbb{R})\).  
\end{thm}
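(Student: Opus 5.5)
Existence of a bounded distributional solution is standard: mollify \(b\) and \(u_0\), solve with the smooth (still divergence-free) fields, use the uniform \(L^\infty\) bound \(\|u^\varepsilon\|_\infty\le\|u_0\|_\infty\), and pass to the limit weakly-\(*\). By linearity, uniqueness reduces to showing that a bounded solution \(u\) with \(u_0=0\) vanishes identically. Renormalization then follows in the standard way: \(g(u)\) is itself the limit of the smooth approximations \(g(u^\varepsilon)\), which converge strongly to \(g(u)\) because of uniqueness, so \(g(u)\) solves the equation with datum \(g(u_0)\). Following the abstract, I will not try to make the DiPerna--Lions commutator vanish. Instead I will show that a weighted Littlewood--Paley energy
\[
E(t)=\sum_{q\ge -1} \lambda_q\,\|\Delta_q u(t)\|_{L^2}^2,\qquad \lambda_q=\text{decreasing weights}
\]
satisfies a Gronwall/Osgood-type differential inequality. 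Here \(\Delta_q\) are the dyadic blocks at frequency \(2^q\). The weights \(\lambda_q\) will be chosen in terms of \(\omega\), roughly \(\lambda_q\) built from \(\int_{2^{-q}}^1 ds/\omega(s)\) so that \(\lambda_q\to0\) slowly. The condition \eqref{osgcon} makes \(E\) finite for bounded \(u\), with suitable summability built in, while \(E(t)=0\) forces \(u=0\).

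The key computation is \(\frac{d}{dt}\|\Delta_q u\|_2^2=-2\int \Delta_q u\,[\Delta_q, b\cdot\nabla]u\), which is justified since \(\Delta_q u\) is smooth in \(x\) and \(\diver b=0\). I will estimate the commutator \([\Delta_q,b\cdot\nabla]u\) using the kernel representation
\[
\int \nabla\phi_q(x-y)\,(b(x)-b(y))\,u(y)\,dy .
\]
Since \(u\) is only bounded, I will not expand \(u\) further. Instead I split \(u=\sum_{q'}\Delta_{q'}u\) and use Bony-type frequency localization: only \(|q-q'|\lesssim 1\) interactions, plus the low-frequency parts of \(b\) acting on \(\Delta_{q'}u\), contribute nontrivially. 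The Hajłasz-type condition then gives \(|b(x)-b(y)|\le\omega(|x-y|)(g(x)+g(y))\). Combined with \(|\nabla\phi_q|\lesssim 2^{q}\) localized at scale \(2^{-q}\), and with the doubling-type bound \eqref{omegabounds} to absorb tails of the Schwartz kernel (\(\omega(2^{-q}r)\lesssim r^\beta\omega(2^{-q})\) for \(r\le 2^q\)), the estimate becomes
\[
\Bigl|\int \Delta_q u\,[\Delta_q,b\cdot\nabla]u\Bigr|\lesssim 2^q\omega(2^{-q})\int (Mg)\,|\Delta_q u|\,\sum_{|q'-q|\le N}|\Delta_{q'}u|^{\ast},
\]
with \(M\) the maximal function and \(\ast\) denoting maximal-type averages. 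Then by Hölder, \(L^p\) boundedness of \(M\), and interpolation of \(L^{2p'}\) norms of \(\Delta_q u\) between \(L^2\) and \(L^\infty\) (bounded by \(\|u\|_\infty\)), I get something like
\[
\frac{d}{dt}E\lesssim s(t)\sum_q\lambda_q\,2^q\omega(2^{-q})\,\|\Delta_qu\|_2^{2/p'}\|u\|_\infty^{2/p},\qquad s(t)=\|b(t)\|_{\dot M^{\omega,p}}.
\]

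The main obstacle is closing this inequality. The factor \(2^q\omega(2^{-q})\) grows, the \(L^2\) power is \(2/p'<2\) rather than \(2\), and the remaining transport part telescopes between neighboring shells only because \(\lambda_q-\lambda_{q+1}\) is small. I would first try choosing \(\lambda_q\) with \(\lambda_q-\lambda_{q+1}\approx\lambda_q\,2^{-q}/\omega(2^{-q})\), and then exploit that the worst terms come from differences across adjacent frequencies, weighted by \(\lambda_{q}-\lambda_{q'}\), rather than from \(\lambda_q\) itself. The goal is to land at \(\dot E\lesssim s(t)\,\Theta(E)\) for an Osgood function \(\Theta\), using Jensen/concavity to handle the \(2/p'\) power, and then conclude \(E\equiv0\) by the Osgood lemma since \(E(0)=0\). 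If this direct route fails, I would instead truncate to frequencies \(q\le Q\), run the estimate with errors controlled by \(\|u\|_\infty\) and the tail of \(\sum\lambda_q\|\Delta_q u\|_2^2\), and send \(Q\to\infty\).
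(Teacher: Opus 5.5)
There is a genuine gap, and it sits exactly where you place the main obstacle: the target $\dot E\lesssim s(t)\Theta(E)$ with an Osgood $\Theta$ is out of reach for $p<\infty$, whatever decreasing weights $\lambda_q$ you choose. Your instinct to move the weights onto differences is the right structural step. After summation by parts in $k$ (with quadratic blocks, so the kernels telescope to $\psi_n$), $\diver b=0$ makes the flux through the scale $2^n$ see only frequencies comparable to $2^n$, up to geometric tails. Its size is about $2^n\omega(2^{-n})\int Mg\,|u_n|^2$, modulo maximal functions. But with $\lambda_n-\lambda_{n+1}\approx\lambda_n2^{-n}/\omega(2^{-n})$ this only gives $\dot E\lesssim\int Mg\sum_n\lambda_n|u_n|^2$. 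Since $Mg$ is merely in $L^p$, you must put the weighted square function in $L^{2p'}$ and interpolate with $L^\infty$, which gives $E^{1/p'}\|u\|_{L^\infty}^{2/p}$. This power is sublinear, hence not an Osgood modulus. It also cannot be improved at the level of this pairing: an inequality $\int Gh^2\le\|G\|_{L^p}\Theta(\|h\|_{L^2}^2)$ for all $|h|\le1$ forces $\Theta(E)\ge E^{1/p'}$, as $G=|A|^{-1/p}\mathbf{1}_A$, $h=\mathbf{1}_A$ shows. Neither Jensen nor a cleverer choice of $\lambda_q$ changes this. (Also, with all $\lambda_q>0$, term-by-term differentiation of $E$ is not justified a priori.)

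The missing idea is to avoid closing in $E$ altogether and instead prove $\|u(t)\|_{L^2}=\|u_0\|_{L^2}$ for every bounded solution, using weights that tend to $1$. The paper takes $E_R=\sum_k\chi(G(k)/R)\|\widetilde u_k\|_{L^2}^2$, with $G(k)=\int_{2^{-k}}^1ds/\omega(s)$ and $\chi$ smooth, equal to $1$ on $[0,1]$ and $0$ on $[2,\infty)$. Each weight difference is then $\lesssim R^{-1}(G(n+1)-G(n))\sim R^{-1}2^{-n}/\omega(2^{-n})$, which exactly cancels the growing factor $2^n\omega(2^{-n})$ of the flux. The unweighted bound $\|(\sum_k|u_k|^2)^{1/2}\|_{L^{2p'}}\lesssim\|u\|_{L^\infty}$ then yields $|\dot E_R|\lesssim R^{-1}\|b(t)\|_{\dot M^{\omega,p}}\|u\|_{L^\infty}^2$: linear in $b$, no $E$ on the right, no Gronwall. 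Integrating in time and letting $R\to\infty$ gives conservation, hence uniqueness by linearity. Conservation is also what gives strong convergence of your approximations $u^\varepsilon$ in the renormalization step. This is where the Osgood condition genuinely enters: $G(k)\to\infty$ lets the transition from weight $1$ to weight $0$ be spread thinly enough, and it makes each $E_R$ a finite sum. Your fallback of a sharp truncation at $q\le Q$ fails for the same reason, since the boundary flux $2^Q\omega(2^{-Q})\int Mg\,|u_Q|^2$ need not tend to zero. Finally, the condition $\omega(\lambda s)\le C\lambda^\beta\omega(s)$ is not what controls kernel tails; concavity already gives $\omega(|y|)\le(1+2^k|y|)\omega(2^{-k})$. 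It is needed to sum the interactions of high-frequency blocks of $b$ with much lower blocks of $u$, which your localization to $|q'-q|\le N$ leaves out.
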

Note that  the requirement~\eqref{omegabounds} is not really restrictive since it  roughly requires that \(\omega(s)\lesssim s^\beta\) for \(\beta<1\), and it is satisfied by all relevant examples.

Besides uniqueness of solutions to the transport equation, a great deal of research has been devoted recently to understanding which regularity properties of the initial datum \(u_0\) are propagated by the flow. This is particularly relevant in connection with Bressan's mixing conjecture. In particular it is known that no  fractional Sobolev regularity of \(u_0\) is  preserved by~\eqref{tr},~\cite{AlbertiCrippaMazzucato2019Loss}. On the other hand, when \(b \in W^{1,p}\), some  \(\log\)-regularity is preserved. This was shown for the first time in~\cite{Lege18}, based on~\cite{HadzSeegSmarStre18,SeegSmarStre19}, via an Eulerian technique.  The optimal regularity was then established in~\cite{BrueNguyen2021} via a Lagrangian technique and later in~\cite{meyer2024propagation} via Littlewood-Paley theory. See also~\cite{HuysSaid26} and~\cite{BrueColoPhilJoha26}. Our second main result establishes the optimal propagation of regularity in the Osgood case. See also~\cite[Theorem 4.1]{LiLuo2015} for the Lagrangian approach. 

\begin{thm}\label{regularityosgood}
Let \(p\in(1,\infty)\), let \(\omega\) be an Osgood modulus of continuity that satisfies~\eqref{omegabounds}, and let
\(b\in L^\infty([0,T]\times\mathbb{T}^d;\mathbb{R}^d)\) be divergence-free.
Assume that
\(
\int_0^T
\|b(r,\cdot)\|_{\dot M^{\omega,p}}\,dr
<\infty.
\)

Let \(u\in L^\infty([0,T]\times \mathbb{T}^d)\) be a bounded solution to~\eqref{tr} with initial datum \(u_0\). If
\(u_0\in B^{\omega,a}(\mathbb{T}^d)\) for some
\(\frac12\leq a\leq\frac p2\), then, for every \(t\in[0,T]\),
\[
\|u(t)\|_{B^{\omega,a}}
\lesssim
\left(
\int_0^t
\|b(r,\cdot)\|_{\dot M^{\omega,p}}\,dr
\right)^a
\|u\|_{ L^\infty([0,T]\times \mathbb{T}^d)}
+
\|u_0\|_{B^{\omega,a}},
\]
with the implicit constant depending only on \(d\), \(p\), \(\omega\), and \(a\).
\end{thm}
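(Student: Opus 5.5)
The plan is to replace the Littlewood–Paley definition of \(B^{\omega,a}\) by an equivalent difference-quotient (Gagliardo-type) expression, and then run an energy estimate in the doubled variables \((x,h)\). Set \(W(r):=\int_r^1\frac{ds}{\omega(s)}\), which blows up as \(r\to0\) by~\eqref{osgcon}. I expect the weights in the definition of \(B^{\omega,a}\) to be comparable to \(W(2^{-q})^{a}\). I have not checked the exact normalization, so the exponent of \(W\) below is to be matched to it. Using~\eqref{omegabounds} to compare dyadic shells, the first step is to prove
\[
\|u\|_{B^{\omega,a}}^2\simeq \|u\|_{L^2}^2+\int_{\T^d}\int_{|h|<1}|u(x+h)-u(x)|^2K(h)\,dh\,dx,
\]
with a radial kernel of the form
\[
K(h)=|h|^{-d}\,\frac{|h|}{\omega(|h|)}\,W(|h|)^{2a-1}.
\]
This kernel is chosen so that \(\int_{2^{-q-1}<|h|<2^{-q}}K\,dh\) reproduces the \(q\)-th dyadic weight. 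It satisfies \(|\nabla K(h)|\lesssim K(h)/|h|\), and this is the only structural property used later. Call the double integral \(E(t)\).

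Next I compute \(\frac{d}{dt}E\). Write \(\delta_hf(x)=f(x+h)-f(x)\). Formally \(\delta_hu\) satisfies
\[
\partial_t\delta_hu+b\cdot\nabla_x\delta_hu=-(\delta_hb)\cdot\nabla u(x+h),
\]
and \(\nabla u(x+h)=\nabla_h\delta_hu(x)\). The transport term vanishes after integrating in \(x\), because \(\diver b=0\). In the remaining term, the derivative sits on \(h\), so I integrate by parts in \(h\). Since \(\diver_h\delta_hb(x)=(\diver b)(x+h)=0\), this gives
\[
\frac{d}{dt}E=\int\!\!\int|\delta_hu|^2\,\delta_hb\cdot\nabla K(h)\,dh\,dx.
\]
There is also a boundary term at \(|h|=1\), which is harmless since \(u\) is bounded.

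To justify this computation for merely bounded \(u\), I use Theorem~\ref{renormalizationosgood}. Both \(u(t,x)\) and \(u(t,x+h)\) are renormalized solutions of transport equations by divergence-free fields, and a doubling-of-variables argument applied to the product (or mollification plus uniqueness) makes the identity rigorous. One could also work first with the truncated kernel \(K\mathbf 1_{|h|>\varepsilon}\) and let \(\varepsilon\to0\) at the end.

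The main estimate, and the expected obstacle, is bounding the right-hand side without losing Osgood integrability. Using \(|\delta_hb|\le\omega(|h|)(g(x)+g(x+h))\) and \(|\nabla K|\lesssim K/|h|\), the integrand is controlled by \(|\delta_hu|^2\,g\,d\mu\), where
\[
d\mu=\frac{\omega(|h|)}{|h|}K(h)\,dh=|h|^{-d}W(|h|)^{2a-1}\,dh
\]
is not finite near \(0\). I would interpolate:
\[
|\delta_hu|^2\le(2\|u\|_\infty)^{2/p}|\delta_hu|^{2/p'},
\]
and then apply Hölder in \((x,h)\) with exponents \(p'\) and \(p\) against a redistributed weight. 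The idea is to put \(|\delta_hu|^2K\) in the \(L^{1}\)-factor raised to \(1/p'\), and \(g^p\) against the leftover weight \((d\mu)^p K^{-p/p'}\) in the other factor. Checking homogeneity in \(W\), the leftover weight is \(|h|^{-d}W'(|h|)W(|h|)^{(2a-1)p-(2a-1)(p-1)-\dots}\), and it should be integrable near \(0\) precisely when \(a\le p/2\). This is where the hypothesis \(\tfrac12\le a\le\tfrac p2\) must be used, and I would first try dyadically splitting the \(h\)-integral to make the exponents match exactly.

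This yields the differential inequality
\[
\dot E\lesssim\|b(t)\|_{\dot M^{\omega,p}}\|u\|_\infty^{2/p}E^{1-1/p}
\]
(or the analogous inequality with the exponent dictated by \(a\)). Integrating it gives
\[
E(t)^{1/(2a)}\lesssim E(0)^{1/(2a)}+\|u\|_\infty^{1/a}\int_0^t\|b\|_{\dot M^{\omega,p}}.
\]
Taking square roots, together with \(\|u(t)\|_{L^2}=\|u_0\|_{L^2}\), gives the stated bound.
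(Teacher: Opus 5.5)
The first two steps of your proposal are sound, but the main estimate has a genuine gap: once you take absolute values you lose the divergence-free cancellation, and the resulting bound provably cannot give the differential inequality you state.

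\textbf{What works.} The Gagliardo-type reformulation is fine. With your $K$ one gets $\int_{2^{-q}<|h|<1}K\,dh\sim\frac1{2a}W(2^{-q})^{2a}=\frac1{2a}G(q)^{2a}$. The bound $|\nabla K|\lesssim K/|h|$ fails near $|h|=1$ when $a>\frac12$, because $W(1)=0$, but replacing $W$ by $W+1$ fixes this. The identity $\frac{d}{dt}E=\iint|\delta_hu|^2\,\delta_hb\cdot\nabla K$ is also correct.

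\textbf{The gap.} After absolute values you must control
\[
\iint|\delta_hu|^2\,g(x)\,|h|^{-d}W(|h|)^{2a-1}\,dh\,dx .
\]
No choice of H\"older exponents bounds this by $\|g\|_{L^p}\|u\|_\infty^{1/a}E^{1-1/(2a)}$. Take $\omega(s)=s$ and $g\equiv1$, which covers Lipschitz fields. Then $\mu=K$, so the quantity is $2E$ itself. For $u=\cos(2\pi2^qx_1)$ one has $E\sim q^{2a}$, while the target is $\sim q^{2a-1}$. At $a=\frac12$ the quantity is $\sim q$ against $\|u\|_\infty^2=1$.

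If instead $\omega(s)/s\to\infty$ (e.g.\ $s\log(1/s)$), then $\mu=\frac{\omega(|h|)}{|h|}K$ exceeds $K$ by an unbounded factor. In that case not even a Gronwall inequality closes, even for $g\in L^\infty$.

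Your specific split fails for the same reason. The leftover weight is
\[
\mu^pK^{1-p}=|h|^{-d}\Bigl(\frac{\omega(|h|)}{|h|}\Bigr)^{p-1}W(|h|)^{2a-1},
\]
and radially it is $\gtrsim r^{-1}$, so it is non-integrable at $0$ for every $p>1$ and $a\ge\frac12$. The condition $a\le p/2$ never enters.

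\textbf{Why, and what the paper does instead.} The mechanism is frequency accumulation. At $|h|\sim2^{-k}$ the quantity $|\delta_hu|^2$ sees every frequency of $u$ above $2^k$. So after absolute values a frequency $2^q$ collects weight $\sum_{k\le q}G(k)^{2a-1}\gtrsim G(q)^{2a}$, using the bounded increments of $G$. That is at least one power of $G$ more than you can afford.

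The paper exploits the cancellation from $\diver b=0$ before estimating. It works with $E_N=\sum_kG(k\wedge N)^{2a}\|\widetilde u_k\|_{L^2}^2$ and sums by parts. The increment $G(n)^{2a}-G(n+1)^{2a}\sim G(n)^{2a-1}2^{-n}/\omega(2^{-n})$ then multiplies $\sum_{k\le n}\int\widetilde u_kr_k$. In that sum, interactions with $n\ge\ell+1$ vanish because $\int b_m\cdot\nabla(u_ju_\ell)=0$. The surviving interactions are localized at scale $n$. Low frequencies of $b$ enter only through the commutator, i.e.\ $\nabla b^{\le}_{j-2}$. High frequencies of $b$ come with geometric decay from \eqref{omegabounds}.

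The result is the square function $\bigl(\sum_kG(k\wedge N)^{2a-1}|u_k|^2\bigr)^{1/2}$ in $L^{2p'}$, whose weight is one power of $G$ lower than in $E$. Proposition~\ref{prop:interpolation} interpolates it between $\operatorname{BMO}\supset L^\infty$ and $E$. This is exactly where $a\le p/2$ is used, via $4a/(2a-1)\ge2p'$.

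To rescue the physical-space route you would need to build in a cancellation such as $\int_{|h|=r}\delta_hb\cdot\frac{h}{|h|}\,d\sigma(h)=0$, and it is unclear how that would produce the lower weight. A secondary issue: your justification presupposes $E(t)<\infty$, which is what you are proving. Also, truncating to $K\mathbf 1_{|h|>\varepsilon}$ creates a boundary term at $|h|=\varepsilon$ that does not obviously vanish.
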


In the above theorem, the Besov-like space \(B^{\omega,a}\) is defined in terms of \(\omega\) (or more precisely of a suitable primitive of its reciprocal) and coincides with the logarithmic spaces when \(\omega(t)\sim t\).

\subsection{Ideas of the proof}

The classical approach to uniqueness of solutions to~\eqref{tr} is to show that solutions are renormalized. In order to do this, one  shows that  the mollification of a solution \(u_\delta=u*\varphi_{\delta}\) satisfies an equation of the form
\[
 \partial_t u_\delta+b\cdot\nabla u_\delta=r_{\delta}
\]
where \(r_{\delta }\) is the commutator:
\[
 r_{\delta }=b\cdot\nabla u_\delta-(\diver (bu))_\delta.
\]
Since, dimensionally, the commutator scales like \(u\nabla b\), one then relies on the Sobolev regularity of \(b\) to show that \(r_\delta\) vanishes as \(\delta \to 0\).

In our case we shall argue differently:  indeed a priori it is not even clear that under our Osgood assumption \(r_\delta\) is bounded. Our proof is  based on the  ``commutator'' between a suitable pseudo-differential operator and  the transport operator \( b\cdot\nabla\). 

More precisely, we let  \(u_k\) be the \(k\)-th Littlewood-Paley piece of \(u\)  (see the next section) and for a smooth function \(\chi\), let \(\chi(D)\) be the operator defined as
\[
\chi(D)u=\sum_{k}\chi(k)u_k.
\]
One formally  gets that
\[
\partial_t (\chi(D)u)+b\cdot\nabla\chi(D)u = [b\cdot \nabla , \chi(D)]u.  
\]
By exploiting the antisymmetry of \(b\cdot\nabla\),  one gets that (roughly and neglecting various other terms)
\[
 \frac{d}{dt} \frac{1}{2} \sum_{k}\chi(k)\|u_k\|^2_{L^2} =\bigl\langle [b\cdot \nabla , \chi(D)]u, u\bigr\rangle_{L^2}\lesssim \sum_{k}\chi'(k) \int |\nabla  b_{\le k}| u^2_k  .
\]
The presence of the derivative of \(\chi\) is the key fact that allows for a suitably chosen sequence of test functions \(\chi_R\to 1 \) such that the right-hand side vanishes. This shows the conservation of the \(L^2\) norm of \(u\) for any bounded solution, which easily implies uniqueness. The proof of regularity then follows the same lines. In particular one should note that when \(b\) is Sobolev and \(\chi (t)\sim t\), the above calculation is the underlying mechanism that allows for the propagation of the square root of the log of a derivative. 

We conclude by noticing that the commutator between the transport operator and various pseudo-differential operators was already considered in~\cite{HuysSaid26}. There the main ``cancellation'' phenomenon was based on the commutator  estimates of~\cite{HadzSeegSmarStre18}. While the underlying phenomenon is the same, the estimates do not seem to be  localized enough in frequency space to be able to obtain the bounds needed to deal with Osgood regularity.

\subsection*{Acknowledgments}
GDP's research is funded by the European Research Council (ERC) through CoG   101169953 ``RISE''.\footnote{Views and opinions expressed are however those of the authors only and do not necessarily reflect those of the European Union or the European Research Council.}
LF acknowledges support from the Isaac Newton Trust through a Trinity Cambridge Research Studentship. LF also acknowledges the support of the Scuola Galileiana di Studi Superiori, where part of this work was carried out during his time as a student.
The authors used ChatGPT and Claude for minor copyediting assistance during the final revision of this paper. The manuscript and all mathematical content were written entirely by the authors.

\section{Littlewood-Paley decomposition and preliminary estimates}

Throughout the paper, we write \(A\lesssim B\) if there exists a constant \(C>0\)
such that \(A\le CB\). We write \(A\sim B\) if \(A\lesssim B\) and \(B\lesssim A\).
Unless otherwise stated, the implicit constants may depend on the dimension and
on other fixed parameters.
\subsection{Fourier preliminaries}

We fix our Fourier conventions on \(\mathbb{T}^d\) and \(\mathbb{R}^d\).
For \(k \in \mathbb{Z}^d\), we define the Fourier coefficients of a function \(f\) on
\(\mathbb{T}^d=\mathbb{R}^d/\mathbb{Z}^d\) by
\[
\widehat{f}(k)=\int_{\mathbb{T}^d} f(x)e^{-2\pi i k\cdot x}\,dx.
\]
For \(\xi \in \mathbb{R}^d\), we define the Fourier transform of a function \(K\) on
\(\mathbb{R}^d\) by
\[
\widehat{K}(\xi)=\int_{\mathbb{R}^d} K(x)e^{-2\pi i \xi\cdot x}\,dx.
\]
Whenever we convolve a periodic function \(f\) on \(\mathbb{T}^d\) with a function
\(K\) on \(\mathbb{R}^d\), we regard \(f\) as its periodic extension to \(\mathbb{R}^d\). Accordingly,
\begin{equation}\label{eq:convolution}
(K*f)(x)
:= \int_{\mathbb{R}^d} K(x-y)f(y)\,dy
= \int_{\mathbb{T}^d} \sum_{m\in\mathbb{Z}^d} K(x-z-m)\,f(z)\,dz,
\end{equation}
which defines a periodic function on \(\mathbb{T}^d\). Whenever the expressions are well
defined, for every \(k\in\mathbb{Z}^d\) we then have
\begin{equation}\label{eq:fourier_convolution_product}
\widehat{K*f}(k)=\widehat{K}(k)\widehat{f}(k).
\end{equation}
Similarly, for functions \(f,g\) on \(\mathbb{T}^d\) and every \(k\in\mathbb{Z}^d\),
\begin{equation}\label{eq:fourier_product_convolution}
\widehat{fg}(k)=\sum_{\ell\in\mathbb{Z}^d}\widehat{f}(\ell)\widehat{g}(k-\ell).
\end{equation}
For functions \(f,g\in L^2(\mathbb{T}^d)\), Parseval's identity reads
\begin{equation}\label{eq:parseval}
\int_{\mathbb{T}^d} f(x)\overline{g(x)}\,dx
=
\sum_{k\in\mathbb{Z}^d}\widehat{f}(k)\overline{\widehat{g}(k)}.
\end{equation}

\subsection{Littlewood-Paley decomposition}

We recall the Littlewood-Paley decomposition on \(\mathbb{T}^d\); for a more detailed
treatment we refer to~\cite[Chapter~6]{grafakos2014classical}
and~\cite[Chapter~2]{BahouriCheminDanchin2011}.

Let \(\varphi\in\mathcal{S}(\mathbb{R}^d)\) be radial. We choose \(\widehat{\varphi}\) to be radial and non-increasing as a function of \(|\xi|\), with
\[
0\leq \widehat{\varphi}\leq1,
\qquad
\operatorname{spt}\widehat{\varphi}\subset B_1(0),
\qquad
\widehat{\varphi}=1 \quad\text{on }\overline{B_{1/2}(0)}.
\]
We further choose \(\varphi\) so that there exists a smooth real-valued radial function \(\vartheta\) satisfying
\[
\vartheta(\xi)^2
=
\widehat{\varphi}(\xi)-\widehat{\varphi}(2\xi)
\qquad\text{for every }\xi\in\mathbb{R}^d.
\] We set
\(\varphi_0:=\varphi\) and, for each integer \(k\geq 1\), we define
\begin{equation}\label{pezzilittlewoodpaley}
\varphi_k(x):=2^{kd}\varphi(2^k x)-2^{(k-1)d}\varphi(2^{k-1}x).
\end{equation}
Then
\begin{equation}\label{pezzitrasformati}
\widehat{\varphi_k}(\xi)
=
\widehat{\varphi}\!\left(\frac{\xi}{2^k}\right)
-
\widehat{\varphi}\!\left(\frac{\xi}{2^{k-1}}\right),
\end{equation}
for every \(k\geq 1\) and every \(\xi\in\mathbb{R}^d\). Since \(\widehat{\varphi}\) is radial and non-increasing as a function of \(|\xi|\), it follows that
\begin{equation}\label{eq:phikpositive}
\widehat{\varphi_k}(\xi)\geq 0
\qquad\text{for every } k\geq 1 \text{ and every } \xi\in\mathbb{R}^d.
\end{equation}
Furthermore,
\begin{equation}\label{eq:dyadic_support}
\operatorname{spt}\widehat{\varphi_k}
\subset
B_{2^k}(0)\setminus \overline{B_{2^{k-2}}(0)},
\end{equation}
for every \(k\geq 1\). Hence \(\widehat{\varphi_k}\) and \(\widehat{\varphi_\ell}\) have disjoint supports whenever \(|k-\ell|\geq 2\). Moreover,
\[
\widehat{\varphi_k}
=
\widehat{\varphi_k}\bigl(\widehat{\varphi}_{k-1}+\widehat{\varphi}_k+\widehat{\varphi}_{k+1}\bigr),
\]
for every \(k\geq 1\), where for \(k=1\) we understand \(\varphi_{k-1}\) as \(\varphi_0=\varphi\).
Taking inverse Fourier transforms, we obtain
\begin{equation}\label{relazionepezzinontrasformati}
\varphi_k
=
\varphi_k * (\varphi_{k-1}+\varphi_k+\varphi_{k+1}),
\end{equation}
for every \(k\geq 1\).
For a function \(f\) on \(\mathbb{T}^d\), we define
\(
f_k:=f*\varphi_k,
\)
for every \(k\geq 0\). Since \(B_1(0)\cap \mathbb{Z}^d=\{0\}\), the function \(f_0=f*\varphi\) is constant and coincides with the average of \(f\) over \(\mathbb{T}^d\). For each integer \(k\geq 0\), we also set
\(
\psi_k:=\sum_{j=0}^k \varphi_j.
\)
By telescoping we have
\[
\psi_k(x)=2^{kd}\varphi(2^k x)
\qquad\text{and}\qquad
\widehat{\psi_k}(\xi)=\widehat{\varphi}\!\left(\frac{\xi}{2^k}\right)
\]
for every \(k\geq 0\) and every \(\xi\in\mathbb{R}^d\). The first identity above, together with the change of variables \(z=2^k y\) and the rapid decay of \(\varphi\), gives the useful estimate
\[
\int_{\mathbb{R}^d}(1+2^k|y|)^N
\left(
|\psi_k(y)|
+2^{-k}|\nabla\psi_k(y)|
+|y||\nabla\psi_k(y)|
\right) dy
\lesssim_N 1
\]
for every \(N>0\). We will use this estimate several times in Section~3. We then define
\[
f_k^{\le}:=f*\psi_k \qquad \text{for every } k\geq 0,
\qquad\text{and}\qquad
f_k^{\ge}:=f-f*\psi_{k-1} \qquad \text{for every } k\geq 1.
\]
Thus \(f=f_k^{\le}+f_{k+1}^{\ge}\) for every \(k\geq 0\). Finally, by telescoping,
\[
\widehat{\varphi}(\ell)+\sum_{k=1}^\infty \widehat{\varphi_k}(\ell)=1
\]
for every \(\ell \in\mathbb{Z}^d\). As a consequence, if \(1<p<\infty\) and \(f\in L^p(\mathbb{T}^d)\), then
\[
f=f_0+\sum_{k=1}^\infty f_k
\qquad\text{and}\qquad
f_k^{\ge}=\sum_{j=k}^\infty f_j \quad \text{for every } k\geq 1
\]
with convergence in \(L^p(\mathbb{T}^d)\). In particular, if \(f\) is mean-free, then
\(f=\sum_{k=1}^\infty f_k\) in \(L^p(\mathbb{T}^d)\).

We shall also use the corresponding quadratic Littlewood-Paley decomposition. For every \(k\geq1\), set \(\vartheta_k(\xi):=\vartheta(\xi/2^k)\), and let \(\theta_k\in\mathcal S(\mathbb R^d)\) be defined by \(\widehat{\theta_k}=\vartheta_k\). Then \(\vartheta_k(\xi)^2=\widehat{\varphi_k}(\xi)\) and \(\sum_{k\geq1}\vartheta_k(\xi)^2=1\) for every \(|\xi|\geq1\). We set \(\widetilde f_k:=f*\theta_k\). By construction, \(\varphi_k=\theta_k*\theta_k\) and hence \(f_k=f*\theta_k*\theta_k\). In particular, if \(f\) is mean-free, Parseval's identity gives
\[
\sum_{k\geq1}\|\widetilde f_k\|_{L^2(\T^d)}^2
=
\|f\|_{L^2(\T^d)}^2.
\]
\subsection{Classical Littlewood-Paley results}
We begin by recalling the classical square-function estimate in the mean-free case. If \(u\) is mean-free, then for every
\(q\in(1,\infty)\) one has
\begin{equation}\label{LPtheorem}
\left\|
\left(
\sum_{k\geq 1} |u_k|^2
\right)^{1/2}
\right\|_{L^q}
\sim
\|u\|_{L^q}.
\end{equation}
We refer to~\cite[Theorem~6.1.2]{grafakos2014classical} for a proof.
We will also need the upper bound for arbitrary dyadic dilates of a mean-zero Schwartz function, which is also contained in~\cite[Theorem~6.1.2]{grafakos2014classical}. More precisely, if \(\widetilde{\varphi}_k=2^{kd}\widetilde{\varphi}_0(2^k\cdot)\), where \(\widetilde{\varphi}_0\) is a mean-zero Schwartz function, then for every \(q\in(1,\infty)\),
\begin{equation}\label{LPupperbound}
\left\|
\left(
\sum_{k\geq 1} |u * \widetilde{\varphi}_k|^2
\right)^{1/2}
\right\|_{L^q}
\lesssim
\|u\|_{L^q}.
\end{equation}

We next recall two estimates involving the Hardy-Littlewood maximal operator that will be used together with the square-function bounds above. The first is the classical Fefferman-Stein inequality~\cite{FeffermanStein1971}. Here and below, \(M\) denotes the Hardy-Littlewood maximal operator.
\begin{thm}[Fefferman-Stein inequality]\label{FSvector}
Let \(1<p<\infty\). Then, for every sequence
\(\{F_j\}_{j\geq1}\subset L^p(\mathbb{T}^d)\),
\begin{equation}\label{FSvectorineq}
\left\|
\left(
\sum_{j\geq1}|M(F_j)|^2
\right)^{1/2}
\right\|_{L^p(\mathbb{T}^d)}
\lesssim_p
\left\|
\left(
\sum_{j\geq1}|F_j|^2
\right)^{1/2}
\right\|_{L^p(\mathbb{T}^d)}.
\end{equation}
\end{thm}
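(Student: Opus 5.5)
The plan is to derive the vector-valued Fefferman-Stein inequality from weighted estimates for the maximal operator. The key scalar ingredient is the Fefferman-Stein weighted inequality: for every \(1<r<\infty\) and every nonnegative weight \(w\),
\[
\int_{\T^d}|Mf|^r\,w\,dx\lesssim_r\int_{\T^d}|f|^r\,Mw\,dx.
\]
I would prove this first. At \(r=\infty\) it holds because \(Mf\le \|f\|_{L^\infty(Mw)}\) pointwise wherever \(w\neq0\). At \(r=1\) one has the weak-type bound \(w(\{Mf>\lambda\})\lesssim\lambda^{-1}\int|f|Mw\). This follows from a Vitali covering of the superlevel set by balls \(B\) with \(\fint_B|f|>\lambda\), together with the observation that \(w(3B)/|3B|\le \inf_B Mw\) up to constants, hence \(\int_B|f|\,\frac{w(3B)}{|B|}\lesssim\int_B|f|Mw\). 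Marcinkiewicz interpolation between the weighted measures \(Mw\,dx\) and \(w\,dx\) then gives the strong bound for \(1<r<\infty\). On \(\T^d\) one can work with periodic extensions and balls of radius at most \(1/2\), or directly on the doubling metric measure space \(\T^d\). Either way the covering argument is unchanged.

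Next I would handle the case \(p=2\). This is immediate from the scalar maximal theorem summed over \(j\):
\[
\int\sum_j|MF_j|^2=\sum_j\|MF_j\|_2^2\lesssim\sum_j\|F_j\|_2^2 .
\]

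For \(2<p<\infty\), set \(r=p/2>1\) and use duality. Choose \(0\le w\in L^{r'}\) with \(\|w\|_{r'}=1\) and
\[
\Bigl\|\sum_j|MF_j|^2\Bigr\|_{L^r}=\int\sum_j|MF_j|^2\,w .
\]
Applying the weighted inequality with exponent \(2\) to each \(j\) gives the bound \(\lesssim\int\sum_j|F_j|^2\,Mw\). By H\"older's inequality this is at most \(\bigl\|\sum_j|F_j|^2\bigr\|_{L^r}\|Mw\|_{L^{r'}}\). Since \(r'\in(1,\infty)\), the boundedness of \(M\) on \(L^{r'}\) gives \(\|Mw\|_{r'}\lesssim1\). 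Taking square roots yields the inequality. To keep things rigorous, I would first treat finitely many \(j\) and pass to the limit by monotone convergence.

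The range \(1<p<2\) is the main obstacle. Here duality gives the inequality for the adjoint, but \(M\) is not linear. I would linearize it instead. Fix measurable radius functions \(\rho_j(x)\) and define the linear operators \(T_jF(x)=\fint_{B_{\rho_j(x)}(x)}F\). Then \(|T_jF|\le MF\), and taking a supremum over countably many choices of radii recovers \(MF_j\) by monotone convergence. The inequality for \(2<p'<\infty\) then applies to the adjoint family \(T_j^*\). For \(0\le h_j\) one has \(|T_j^*h_j|\le T_j^*|h_j|\), and the dual estimate needed is
\[
\Bigl\|\bigl(\textstyle\sum_j|T_j^*h_j|^2\bigr)^{1/2}\Bigr\|_{p'}\lesssim\Bigl\|\bigl(\textstyle\sum_j|h_j|^2\bigr)^{1/2}\Bigr\|_{p'} .
\]
To prove it, pair against \(g\) with \(\|g\|_{(p'/2)'}\le 1\) and use the scalar weighted bound in adjoint form, \(\int|T_j^*h|^2 g\lesssim\int|h|^2 Mg\). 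This adjoint form is the step I expect to be delicate. I would derive it from the weighted inequality via duality in \(L^2(g)\) versus \(L^2((Mg)^{-1})\).

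If that derivation proves awkward, I would instead use vector-valued Calderón-Zygmund theory. The operator \(\{F_j\}\mapsto\{MF_j\}\), after linearization, is \(L^2(\ell^2)\)-bounded. It also has a weak \((1,1)\) bound with values in \(\ell^2\), proved by a Calderón-Zygmund decomposition of \(\bigl(\sum_j|F_j|^2\bigr)^{1/2}\): the good part is handled by the \(L^2\) bound, and the bad part is supported near the cubes outside of which averages are controlled. Marcinkiewicz interpolation then gives the result for \(1<p<2\).
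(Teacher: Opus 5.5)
The paper gives no proof of this statement; it is quoted from Fefferman--Stein. Your arguments for \(p=2\) and for \(2<p<\infty\) are correct and classical: the weighted bound \(\int|Mf|^r w\lesssim\int|f|^r Mw\) via Vitali and Marcinkiewicz, followed by duality in \(L^{p/2}\). Your fallback for \(1<p<2\) is also the classical route.

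\textbf{The gap.} The route you put first for \(1<p<2\) fails, because the adjoint weighted bound \(\int|T^*h|^2 g\lesssim\int|h|^2 Mg\) is false. Take \(g=\chi_{B_\epsilon(0)}\), \(h=\chi_{\{\epsilon<|x|<1/4\}}\), and \(\rho(x)=2|x|\) on the support of \(h\).
\begin{itemize}
\item For \(y\in B_\epsilon(0)\), every ball \(B_{\rho(x)}(x)\) with \(x\in\operatorname{spt}h\) contains \(y\). Hence \(T^*h(y)\ge\int_{\epsilon<|x|<1/4}|B_{2|x|}|^{-1}\,dx\gtrsim\log(1/\epsilon)\), and the left side is \(\gtrsim\epsilon^d\log^2(1/\epsilon)\).
\item Since \(Mg(x)\lesssim(\epsilon/|x|)^d\) for \(|x|>\epsilon\), the right side is \(\lesssim\epsilon^d\log(1/\epsilon)\).
\end{itemize}
Duality cannot produce this bound. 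From \(T\colon L^2(Mg)\to L^2(g)\) you only get \(T^*\colon L^2(g^{-1})\to L^2((Mg)^{-1})\). There the weights are inverted, which is useless when pairing against \(g\in L^{(p'/2)'}\). Equivalently, since \(T\) is linear, your bound (uniformly in \(\rho\)) amounts to \(\int(MF)^2(Mg)^{-1}\lesssim\int F^2g^{-1}\). This fails already for \(F=g=\chi_{B_\epsilon}\): the left side is \(\int M\chi_{B_\epsilon}\approx\epsilon^d\log(1/\epsilon)\), while the right side is \(\approx\epsilon^d\).

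\textbf{Possible fixes.} You could rescue the duality route by replacing \(Mg\) with \(M_sg:=(M(g^s))^{1/s}\), where \(1<s<(p'/2)'\).
\begin{itemize}
\item By Coifman--Rochberg, \(M_sg\in A_1\), so \((M_sg)^{-1}\in A_2\).
\item Hence \(\int|TF|^2(M_sg)^{-1}\lesssim\int|F|^2(M_sg)^{-1}\le\int|F|^2g^{-1}\).
\item Dualizing gives \(\int|T^*h|^2g\lesssim\int|h|^2M_sg\), and \(M_s\) is bounded on \(L^{(p'/2)'}\).
\end{itemize}
However, this imports Muckenhoupt's \(A_2\) theorem, which is heavier than the statement itself.

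The cleaner option is to make your Calder\'on--Zygmund argument the main proof and make its bad-part step precise; no cancellation is needed.
\begin{itemize}
\item Let \(\Omega=\bigcup_kQ_k\) be the Calder\'on--Zygmund cubes of \((\sum_j|F_j|^2)^{1/2}\) at height \(\lambda\), and set \(\tilde F_j:=\sum_k(\fint_{Q_k}|F_j|)\chi_{Q_k}\).
\item For \(x\notin\bigcup_kQ_k^*\) one has \(M(F_j\chi_\Omega)(x)\lesssim M\tilde F_j(x)\). This holds because any ball centred at \(x\) that meets \(Q_k\) has a fixed dilate containing \(Q_k\).
\item By Minkowski, \((\sum_j|\tilde F_j|^2)^{1/2}\le 2^d\lambda\) pointwise, and its integral is at most \(\int_\Omega(\sum_j|F_j|^2)^{1/2}\).
\item The \(L^2(\ell^2)\) bound then yields the weak \((1,1)\) estimate.
\item Marcinkiewicz interpolation for the sublinear map \(\{F_j\}\mapsto(\sum_j|MF_j|^2)^{1/2}\) finishes the proof.
\end{itemize}
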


The second is a pointwise estimate for frequency-localized functions, which is a standard consequence of Peetre's maximal inequality; see, for example,~\cite[Lemma~14.6.1]{hytonenvanneervenveraarweis2023}.

\begin{lem}
Let \(C>0\) and \(j\in\mathbb{Z}\), and let \(h\) be a smooth
function on \(\mathbb{R}^d\) such that
\[
\operatorname{spt}\widehat{h}\subset B_{C2^j}(0).
\]
Then, for every \(N>d\),
\begin{equation}\label{eq:maximaltraslate}
|h(x-y)|+2^{-j}|\nabla h(x-y)|
\lesssim_{N,C}
(1+2^j|y|)^N Mh(x)
\end{equation}
for every \(x,y\in\mathbb{R}^d\).
\end{lem}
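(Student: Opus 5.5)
The plan is to deduce \eqref{eq:maximaltraslate} from a reproducing formula combined with the decay of a Schwartz kernel. The naive approach fails: it would bound \(|h(x-y)|\) by averages of \(|h|\) over balls around \(x\), but a pointwise value is not controlled by an average. To get around this, I will first prove the inequality with the Peetre maximal function in place of \(Mh\), then pass to \(Mh\) through a sub-mean-value argument. By rescaling \(h\mapsto h(2^{-j}\cdot)\) we may assume \(j=0\). Both sides transform consistently, since \(M\) commutes with dilations and the weight becomes \((1+|y|)^N\). So let \(\operatorname{spt}\widehat h\subset B_C\). Fix \(\eta\in\mathcal S\) with \(\widehat\eta=1\) on \(B_C\). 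Then \(h=h*\eta\) and \(\nabla h=h*\nabla\eta\).

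Step 1 (Peetre function). For \(0<r\le 1\), define
\[
h^*_{N}(x):=\sup_{z}\frac{|h(x-z)|}{(1+|z|)^{N/r}} .
\]
We may assume \(h^*_N(x)<\infty\); this holds for \(h\) of polynomial growth, and in general one argues via a truncation and approximation. Differentiating the reproducing formula and using the decay of \(\nabla\eta\) gives
\[
|\nabla h(x-z)|\lesssim h^*_N(x)(1+|z|)^{N/r}.
\]
Now apply the mean value theorem on a ball of small radius \(\delta\) around \(x-z\) and use \(|h|^r\)-averages:
\[
|h(x-z)|^r\lesssim \delta^{-d}\int_{B_\delta(x-z)}|h|^r+\delta^r\, h^*_N(x)^r(1+|z|)^{N}.
\]
The average is bounded by \((1+|z|)^{d}\, M(|h|^r)(x)\). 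Dividing by \((1+|z|)^N\), taking the supremum in \(z\), and choosing \(\delta\) small lets us absorb the last term. This needs \(N/r\ge d/r\), i.e. \(N\ge d\) after tracking exponents, and yields
\[
h^*_N(x)^r\lesssim M(|h|^r)(x).
\]
Taking \(r=1\) with \(N>d\) gives
\[
|h(x-y)|\lesssim (1+|y|)^N Mh(x),
\]
where the strict inequality \(N>d\) is what leaves room for the absorption.

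Step 2 (gradient). From the reproducing formula,
\[
|\nabla h(x-y)|\le\int|h(x-y-w)||\nabla\eta(w)|\,dw\le h^*_N(x)\int(1+|y|+|w|)^N|\nabla\eta(w)|\,dw.
\]
This is \(\lesssim (1+|y|)^N h^*_N(x)\), and Step 1 then bounds it by \((1+|y|)^N Mh(x)\). Undoing the scaling produces the factor \(2^{-j}\) on \(\nabla h\).

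The main obstacle is the absorption argument in Step 1. It requires knowing a priori that \(h^*_N(x)<\infty\). If only an \(L^\infty\) or polynomial-growth bound on \(h\) is available, one first runs the argument with \(N\) replaced by a larger \(N'\), for which finiteness is clear. One then observes that the resulting inequality \(h^*_{N'}\lesssim M h\) already gives the claim, because the weight \((1+|y|)^{N'}\) is allowed to be any power exceeding \(d\). For this reason I would, as in \cite[Lemma~14.6.1]{hytonenvanneervenveraarweis2023}, simply quote that result after the rescaling.
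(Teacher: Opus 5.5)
Your route is the paper's. The paper gives no argument of its own and simply invokes Peetre's maximal inequality via \cite[Lemma~14.6.1]{hytonenvanneervenveraarweis2023}, which is where you also end up, and your Steps~1--2 are the standard proof of that inequality.

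One step of your sketch does fail as written: the justification of the a priori bound \(h^*_N(x)<\infty\) in your last paragraph. Running the argument with a larger exponent \(N'\) yields only \(|h(x-y)|\lesssim(1+|y|)^{N'}Mh(x)\). This does not imply the bound with weight \((1+|y|)^N\) for \(d<N<N'\). The lemma asserts the estimate for \emph{every} \(N>d\), so ``the weight is allowed to be any power exceeding \(d\)'' misreads the quantifier. That reduction would suffice for the paper's applications, where the weight is always integrated against a Schwartz kernel, but not for the statement. Two related points are also off:
\begin{itemize}
\item A polynomial growth bound of degree \(K\) guarantees \(h^*_N(x)<\infty\) only for \(N\geq K\).
\item The strict inequality \(N>d\) plays no role in the absorption; once finiteness is known, \(N=d\) works.
\end{itemize}
Either of two repairs works. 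In every use in the paper \(h\) is bounded (\(h=u_j\), \(\nabla b^{\le}_{j-2}\), \(u_k\), with \(u,b\in L^\infty\)), so \(h^*_N(x)\le\|h\|_{L^\infty}\) for all \(N\geq0\). In general, run the argument for \(h\,\Phi(\epsilon\,\cdot)\) with \(\Phi\in\mathcal S\), \(\Phi(0)=1\), \(\operatorname{spt}\widehat\Phi\subset B_1\). This function is rapidly decaying, its Fourier support stays in \(B_{C+1}\) for \(\epsilon\le1\), and you then let \(\epsilon\to0\).

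More to the point, with \(r=1\) no absorption is needed at all, contrary to your opening claim that a pointwise value cannot be controlled by averages. Start from \(h=h*\eta\) and \(|\eta(w)|\lesssim_L(1+|w|)^{-L}\) with \(L>d\). Decompose into the annuli \(|w|\sim 2^k\) and use the inclusion \(B_{2^k}(x-z)\subset B_{2^k+|z|}(x)\). This gives
\[
|h(x-z)|
\lesssim
\sum_{k\geq0}2^{-kL}\bigl(2^k+|z|\bigr)^d\,Mh(x)
\lesssim
(1+|z|)^d\,Mh(x),
\]
and the same computation with \(\nabla\eta\) in place of \(\eta\) handles the gradient. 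This is the dyadic-annuli computation the paper itself performs in the proof of Lemma~\ref{hajlaszdyadicbounds}. It proves the lemma, even with \(N=d\), for every \(h\) with compact Fourier support and with no finiteness hypothesis. The Peetre absorption argument is only needed to reach \(M(|h|^r)^{1/r}\) with \(r<1\), which this lemma does not require.
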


\subsection{Weighted Littlewood-Paley spaces}

We now introduce the weighted Littlewood-Paley spaces in which we will propagate regularity for the transport equation. Let \(\omega\) be an Osgood modulus of continuity. For every \(x>0\), set
\[
G(x):=\int_{2^{-x}}^1 \frac{ds}{\omega(s)},
\]
and define \(G(x):=0\) for \(x\le 0\).
Since \(\omega\) is positive on \((0,1]\) and satisfies the Osgood condition, the function \(G\)
is increasing and
\[
\lim_{x\to\infty} G(x)=+\infty.
\]
In the model case \(\omega(s)=s\), one has \(G(k)\sim k\).
Let now \(a\geq 0\). For a function \(f\) on \(\mathbb{T}^d\), we define
\begin{equation}\label{defbwnorma}
\|f\|_{B^{\omega,a}}
:=
\left(
\sum_{k=1}^{\infty} G(k)^{2a}\|f_k\|_{L^2}^2
\right)^{1/2}.
\end{equation}
When \(f\) is mean-free, this defines a norm on that subspace of
\(L^2(\mathbb{T}^d)\). In general, it is only a seminorm, since the constant part of \(f\)
does not contribute. We denote by \(B^{\omega,a}(\mathbb{T}^d)\) the collection of all
functions \(f\in L^2(\mathbb{T}^d)\) such that \(\|f\|_{B^{\omega,a}}<\infty\).
In the model case \(\omega(s)=s\), the weight \(G(k)^{2a}\) is comparable to \(k^{2a}\), and
therefore
\[
\|f\|_{B^{\omega,a}}
\sim
\left(
\sum_{k=1}^{\infty} k^{2a}\|f_k\|_{L^2}^2
\right)^{1/2}.
\]
Thus \(B^{\omega,a}\) reduces to the logarithmic weighted space used by Meyer and
Seis in their Littlewood-Paley proof of the sharp logarithmic regularity
estimate~\cite{meyer2024propagation}. This is the Littlewood-Paley counterpart
of the logarithmic regularity scale appearing in the estimates of Bru\'e and
Nguyen~\cite{BrueNguyen2021}. See also~\cite{BrueNguyenLogSobolev} for the
corresponding logarithmic Sobolev spaces.

We will also need the following simple properties of the weight \(G\).

\begin{lem}\label{lemma gk+1}
For every integer \(k\geq 1\), one has
\[
G(k)\leq G(k+1)\leq G(k)+G(1).
\]
In particular, \(G(k)\sim G(k+1)\), uniformly in \(k\).
\end{lem}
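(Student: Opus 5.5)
The first inequality holds because \(G\) is increasing. For the second, I split the defining integral at the point \(2^{-k}\):
\[
G(k+1)=\int_{2^{-(k+1)}}^{1}\frac{ds}{\omega(s)}
=G(k)+\int_{2^{-(k+1)}}^{2^{-k}}\frac{ds}{\omega(s)} .
\]
It then suffices to show that the last integral is at most \(G(1)=\int_{1/2}^1 \frac{ds}{\omega(s)}\).

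To compare the two integrals, I substitute \(s=2^{-k}\sigma\) with \(\sigma\in[1/2,1]\). This gives
\[
\int_{2^{-(k+1)}}^{2^{-k}}\frac{ds}{\omega(s)}
=\int_{1/2}^{1}\frac{2^{-k}\,d\sigma}{\omega(2^{-k}\sigma)} .
\]
The key input is concavity of \(\omega\) together with \(\omega(0)\ge 0\). Since \(\omega\) is nonnegative and continuous, concavity on \([0,\infty)\) gives \(\omega(\lambda\sigma)\ge \lambda\omega(\sigma)+(1-\lambda)\omega(0)\ge\lambda\omega(\sigma)\) for \(\lambda\in[0,1]\). Taking \(\lambda=2^{-k}\), we get \(\omega(2^{-k}\sigma)\ge 2^{-k}\omega(\sigma)\). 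Hence the integrand is bounded by \(1/\omega(\sigma)\), and the integral is at most \(\int_{1/2}^1 \frac{d\sigma}{\omega(\sigma)}=G(1)\).

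Finally, since \(G\) is increasing, \(G(k)\ge G(1)>0\) for \(k\ge1\), with positivity coming from \(\omega>0\) on \((0,1]\). Therefore
\[
G(k+1)\le G(k)+G(1)\le 2G(k),
\]
which gives \(G(k)\sim G(k+1)\) uniformly in \(k\).

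The only point needing care is the use of concavity to get the sublinear scaling \(\omega(\lambda s)\ge\lambda\omega(s)\). This requires \(\omega(0)\ge 0\), which holds because \(\omega\) takes values in \(\R_{\ge0}\). Hypothesis~\eqref{omegabounds} is not needed.
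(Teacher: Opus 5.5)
Your proof is correct and follows essentially the same route as the paper. Both split off \(\int_{2^{-(k+1)}}^{2^{-k}} ds/\omega(s)\), rescale it to \([1/2,1]\), and bound it by \(G(1)\) via the concavity estimate \(\omega(\lambda s)\ge\lambda\omega(s)\); your explicit step \(G(k+1)\le G(k)+G(1)\le 2G(k)\), using \(G(k)\ge G(1)>0\), supplies the ``in particular'' that the paper leaves implicit.
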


\begin{proof}
The first inequality is immediate. For the second one, we write
\[
G(k+1)=G(k)+\int_{2^{-(k+1)}}^{2^{-k}} \frac{ds}{\omega(s)}.
\]
Using the concavity of \(\omega\) and the fact that \(\omega(0)=0\), one has
\[
\omega(\lambda s)\geq \lambda \omega(s)
\]
for every \(s\geq 0\) and every \(\lambda\in[0,1]\). Taking \(\lambda=2^{-k}\), we obtain
\[
\omega\left(\frac{s}{2^k}\right)\geq \frac{1}{2^k}\omega(s)
\]
for every \(s\in[1/2,1]\). Hence
\begin{align*}
\int_{2^{-(k+1)}}^{2^{-k}} \frac{ds}{\omega(s)}
&=
\int_{1/2}^1 \frac{2^{-k}\,dr}{\omega(r/2^k)}
\leq
\int_{1/2}^1 \frac{dr}{\omega(r)}
=
G(1),
\end{align*}
which proves the claim.
\end{proof}
\begin{rem}
The same concavity argument shows that, for every integer \(k\geq1\),
\[
\omega(2^{-(k+1)})
\leq
\omega(2^{-k})
\leq
2\omega(2^{-(k+1)}).
\]
Iterating this estimate, we obtain, for every fixed \(L\geq1\),
\begin{equation}\label{eq:adjacentomega}
\omega(2^{-j})\sim_L\omega(2^{-k})
\qquad
\text{whenever } |j-k|\leq L,
\end{equation}
uniformly in \(j\) and \(k\).
\end{rem}

\begin{lem}
For every integer \(k\geq1\), one has
\begin{equation}\label{incrementG}
G(k+1)-G(k)
\sim
\frac{2^{-k}}{\omega(2^{-k})}.
\end{equation}
\end{lem}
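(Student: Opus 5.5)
We need to show that \(G(k+1)-G(k)=\int_{2^{-(k+1)}}^{2^{-k}}\frac{ds}{\omega(s)}\) is comparable to \(2^{-k}/\omega(2^{-k})\). The plan is to bound \(1/\omega\) above and below on the dyadic interval of integration, using only that \(\omega\) is increasing together with the doubling estimate of the preceding remark. The integral to estimate is
\[
G(k+1)-G(k)=\int_{2^{-(k+1)}}^{2^{-k}}\frac{ds}{\omega(s)},
\]
exactly as in the proof of Lemma~\ref{lemma gk+1}. The interval of integration has length \(2^{-(k+1)}\).

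For the lower bound, we use monotonicity of \(\omega\): for \(s\le 2^{-k}\) we have \(\omega(s)\le\omega(2^{-k})\). Integrating \(1/\omega(s)\ge 1/\omega(2^{-k})\) over the interval gives
\[
G(k+1)-G(k)\ \ge\ \frac{2^{-(k+1)}}{\omega(2^{-k})}=\frac12\,\frac{2^{-k}}{\omega(2^{-k})}.
\]

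For the upper bound, monotonicity gives \(\omega(s)\ge\omega(2^{-(k+1)})\) for \(s\ge 2^{-(k+1)}\). The remark following Lemma~\ref{lemma gk+1}, which comes from concavity and \(\omega(0)=0\), gives \(\omega(2^{-(k+1)})\ge\tfrac12\omega(2^{-k})\). Hence \(1/\omega(s)\le 2/\omega(2^{-k})\) on the whole interval, and integrating yields
\[
G(k+1)-G(k)\ \le\ 2^{-(k+1)}\cdot\frac{2}{\omega(2^{-k})}=\frac{2^{-k}}{\omega(2^{-k})}.
\]

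Combining the two bounds gives \eqref{incrementG} with explicit constants \(1/2\) and \(1\), uniform in \(k\ge1\). There is no genuine obstacle here. The only nontrivial input is the comparison \(\omega(2^{-(k+1)})\ge\frac12\omega(2^{-k})\), which is already available from the concavity of \(\omega\). Hypothesis~\eqref{omegabounds} is not needed.
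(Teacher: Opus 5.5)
Your proof is correct and is the paper's argument: it uses the same two-sided monotonicity bound for \(1/\omega\) on \([2^{-(k+1)},2^{-k}]\), and closes the upper bound with the concavity comparison \(\omega(2^{-(k+1)})\geq\frac12\omega(2^{-k})\). The paper invokes this comparison via \eqref{eq:adjacentomega}, while you use it directly, which gives the explicit constants \(\frac12\) and \(1\).
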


\begin{proof}
By the monotonicity of \(\omega\),
\[
\frac{2^{-(k+1)}}{\omega(2^{-k})}
\leq
G(k+1)-G(k)
\leq
\frac{2^{-(k+1)}}{\omega(2^{-(k+1)})}.
\]
The conclusion follows from~\eqref{eq:adjacentomega}.
\end{proof}

We end this section with a lemma that bounds the supremum of some weighted quantities involving the Littlewood-Paley blocks, where the weights depend on the Osgood modulus
of continuity.

\begin{lem}\label{hajlaszdyadicbounds}
Let \(p\in(1,\infty)\) and let
\(f\in L^\infty(\mathbb{T}^d;\mathbb{R}^m)\) satisfy
\(\|f\|_{\dot M^{\omega,p}}<\infty\). Then
\begin{equation}\label{eq:hajlaszdyadicbounds}
\left\|
\sup_{k\geq1}
\frac{|f_k|+2^{-k}|\nabla f_k^{\le}|}
{\omega(2^{-k})}
\right\|_{L^p}
\lesssim_p
\|f\|_{\dot M^{\omega,p}}.
\end{equation}
\end{lem}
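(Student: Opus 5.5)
Fix a nonnegative \(g\in L^p(\T^d)\) admissible for \(f\) in the definition of \(\|f\|_{\dot M^{\omega,p}}\), with \(\|g\|_{L^p}\le 2\|f\|_{\dot M^{\omega,p}}\). The plan is to prove the pointwise bound
\[
\frac{|f_k(x)|+2^{-k}|\nabla f_k^{\le}(x)|}{\omega(2^{-k})}\lesssim g(x)+Mg(x)
\qquad\text{for every } k\ge1 \text{ and a.e. } x,
\]
with a constant independent of \(k\). Taking the supremum in \(k\) and using the \(L^p\) boundedness of \(M\) for \(p>1\) then gives~\eqref{eq:hajlaszdyadicbounds}.

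\emph{Cancellation.} For \(k\ge1\), \(\varphi_k\) has zero mean, since \(\widehat{\varphi_k}(0)=0\). Likewise \(\nabla\psi_k\) has zero integral. We can therefore write
\[
f_k(x)=\int_{\R^d}\varphi_k(y)\bigl(f(x-y)-f(x)\bigr)\,dy,\qquad
\nabla f_k^{\le}(x)=\int_{\R^d}\nabla\psi_k(y)\bigl(f(x-y)-f(x)\bigr)\,dy.
\]
The periodic extension of \(f\) satisfies the Haj\l asz inequality with \(|x-y|\) replaced by the torus distance, which is at most \(|y|\). Since \(\omega\) is increasing, we get \(|f(x-y)-f(x)|\le\omega(|y|)(g(x)+g(x-y))\).

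\emph{Comparing \(\omega(|y|)\) with \(\omega(2^{-k})\).} For \(|y|\le 2^{-k}\), monotonicity gives \(\omega(|y|)\le\omega(2^{-k})\). For \(|y|\ge 2^{-k}\), concavity with \(\omega(0)=0\) gives \(\omega(\lambda s)\le\lambda\omega(s)\) for \(\lambda\ge1\), hence \(\omega(|y|)\le 2^k|y|\,\omega(2^{-k})\). Combining the two cases,
\[
\omega(|y|)\le(1+2^k|y|)\,\omega(2^{-k}).
\]
Note that~\eqref{omegabounds} is not even needed here. Thus
\[
\frac{|f_k(x)|}{\omega(2^{-k})}\le\int(1+2^k|y|)|\varphi_k(y)|\,\bigl(g(x)+g(x-y)\bigr)\,dy .
\]
The same bound holds for the gradient term, with \(2^{-k}|\nabla\psi_k|\) in place of \(|\varphi_k|\). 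The weight is integrable with uniform bound in \(k\) by the moment estimate stated for \(\psi_k\), because \(\varphi_k=\psi_k-\psi_{k-1}\). This handles the \(g(x)\) part.

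\emph{Main step: the \(g(x-y)\) part.} Here we need \(\int K_k(y)g(x-y)\,dy\lesssim Mg(x)\) for \(K_k(y)=(1+2^k|y|)\bigl(|\varphi_k|+2^{-k}|\nabla\psi_k|\bigr)(y)\). The kernel satisfies \(K_k(y)\lesssim_N 2^{kd}(1+2^k|y|)^{-N}\), which is radially decreasing and integrable uniformly in \(k\). The standard majorization by radially decreasing \(L^1\) kernels then gives the bound by \(Mg(x)\). The points to check are the periodization and the fact that \(M\) is taken on \(\T^d\). The tails \(|y|\gtrsim1\) contribute at most \(\lesssim 2^{-k(N-d)}\|g\|_{L^1(\T^d)}\lesssim Mg(x)\). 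This is the step I expect to need the most care, but it is routine. Finally we take the supremum over \(k\) and the \(L^p\) norm.
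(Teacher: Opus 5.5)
Your proposal is correct and is essentially the paper's proof. Both arguments use the vanishing integrals of \(\varphi_k\) and \(\nabla\psi_k\) to subtract \(f(x)\), the admissibility inequality for \(g\), the concavity bound \(\omega(|y|)\le(1+2^k|y|)\,\omega(2^{-k})\), domination of the resulting rapidly decaying kernel by \(Mg(x)\), and finally the \(L^p\) boundedness of \(M\). The paper carries out the domination step explicitly by splitting into dyadic annuli, which is exactly the standard proof of the radially decreasing majorant lemma you invoke.
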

\begin{proof}
Let \(g\in L^p(\mathbb{T}^d)\) be a nonnegative function such that
\[
|f(x)-f(y)|
\leq
\omega(|x-y|)\bigl(g(x)+g(y)\bigr)
\]
for almost every \(x,y\in\mathbb{T}^d\). By~\eqref{pezzilittlewoodpaley},
\(\varphi_k\) has zero integral, and hence
\[
|f_k(x)|
\leq
\int_{\mathbb{R}^d}
|\varphi_k(y)|\,|f(x-y)-f(x)|\,dy.
\]
It follows from the definition of \(g\) that
\[
|f_k(x)|
\leq
\int_{\mathbb{R}^d}
|\varphi_k(y)|\omega(|y|)
\bigl(g(x-y)+g(x)\bigr)\,dy.
\]
Since \(\omega\) is concave and monotone we have
\[
\omega(|y|)
\leq
(1+2^k|y|)\omega(2^{-k}).
\]
Moreover, since \(\varphi\) is a Schwartz function,~\eqref{pezzilittlewoodpaley}
gives
\(
|\varphi_k(y)|
\lesssim
\frac{2^{kd}}{(1+2^k|y|)^{d+2}}.
\)
Combining the previous two estimates and splitting the integral into
dyadic annuli, we obtain
\begin{align*}
|f_k(x)|
&\lesssim
\omega(2^{-k})
\left(
g(x)
+
\sum_{j=0}^{\infty}
2^{kd-j(d+1)}
\int_{|y|\leq 2^{j+1-k}}g(x-y)\,dy
\right) \\
&\lesssim
\omega(2^{-k})
\left(
g(x)+\sum_{j=0}^{\infty}2^{-j}Mg(x)
\right)
\lesssim
\omega(2^{-k})Mg(x).
\end{align*}
The gradient estimate follows in the same way by using the mean-zero kernel \(2^{-k}\nabla\psi_k(y)\) to subtract \(f(x)\) inside the convolution. Thus,
\[
\sup_{k\geq1}
\frac{|f_k|+2^{-k}|\nabla f_k^{\le}|}
{\omega(2^{-k})}
\lesssim
Mg
\]
almost everywhere. Taking the \(L^p\) norm, using the boundedness of the
Hardy-Littlewood maximal operator, and then taking the infimum over all
admissible functions \(g\) proves the claim.
\end{proof}

\section{Proofs of Theorems~\ref{renormalizationosgood} and~\ref{regularityosgood}}
In this section, we prove uniqueness and propagation of regularity for solutions associated with vector fields satisfying our assumptions.

Let us recall the definition of a weak solution in our setting.
\begin{defn}
We say that a locally summable function
\(u:[0,T]\times\mathbb{T}^d\to\mathbb{R}\) is a solution of the Cauchy problem
for the transport equation with initial datum
\(u_0\in L^\infty(\mathbb{T}^d)\) if it satisfies the system
\begin{equation}\label{e:cont}
 \begin{cases}
   \partial_t u+\nabla\cdot(bu)=0,\\
   u(0,\cdot)=u_0,
 \end{cases}
\end{equation}
in the distributional sense, that is, if for every test function
\(\xi\in C_c^\infty([0,T)\times\mathbb{T}^d)\) one has
\begin{equation}\label{condistrtoro}
\int_0^T\int_{\mathbb{T}^d}
u\left[\partial_t\xi+b\cdot\nabla\xi\right]\,dx\,dt
=
-\int_{\mathbb{T}^d} u_0(x)\xi(0,x)\,dx.
\end{equation}
\end{defn}
Note that, since \(\diver b=0\), \eqref{e:cont} is equivalent to \eqref{tr}.

We start by assuming that the solution \(u\) is mean-free. This is not restrictive for both theorems. Indeed, since \(b\) is divergence-free, the spatial mean of any distributional solution is preserved in time. Testing the equation with a function of the form \(\xi(t,x)=\eta(t)\), where \(\eta\in C_c^\infty([0,T))\), shows that the distributional time derivative of
\[
t\mapsto\int_{\mathbb{T}^d}u(t,x)\,dx
\]
vanishes. Hence,
\[
\int_{\mathbb{T}^d}u(t,x)\,dx
=
\int_{\mathbb{T}^d} u_0(x)\,dx
\qquad\text{for all }t\in[0,T].
\]
Hence one may always subtract the conserved mean and reduce to the case of mean-free solutions.

We begin with a computation common to both theorems, which will then be specialized to prove the two results. We focus on estimating the time derivative of a weighted sum of the \(L^2\) norms of the quadratic Littlewood-Paley blocks. Let \(F\) be a \(C^1\) function and consider
\[
E_F(t)
:=
\sum_{k\geq1}F(G(k))
\|\widetilde u_k(t)\|_{L^2(\T^d)}^2.
\]
Since \(u\) solves the transport equation in the sense of distributions,
convolving the equation with \(\theta_k\) gives
\begin{equation}\label{eq:weighted-regularized-equation}
\partial_t\widetilde u_k+b\cdot\nabla\widetilde u_k=r_k
\end{equation}
in the sense of distributions, where
\[
r_k(t,x)
=
\int_{\R^d}
u(t,x-y)\bigl(b(t,x)-b(t,x-y)\bigr)\cdot\nabla\theta_k(y)\,dy.
\]
The term \(r_k\) is the classical DiPerna-Lions commutator~\cite{DiPernaLions1989}.

We consider the representative of \(\widetilde u_k\) which is absolutely
continuous in time; see, for instance,~\cite{DiPernaLions1989,Ambrosio2004}.
Testing~\eqref{eq:weighted-regularized-equation} against \(\widetilde u_k\) gives,
for almost every \(t\in(0,T)\),
\[
\frac12\frac d{dt}
\|\widetilde u_k(t)\|_{L^2}^2
=
\int_{\T^d}\widetilde u_k(t,x)r_k(t,x)\,dx.
\]
We will now use this identity to differentiate \(E_F(t)\). Without any further properties of the solution \(u\), this is justified only when finitely many of the terms \(F(G(k))\) are nonzero. This will be the case for our first choice of \(F\), while for the second choice we will overcome this issue using the consequences of the first one.
Thus, when the above differentiation is justified, we have
\[
\frac12\frac d{dt}E_F(t)
=
\sum_{k\geq1}
F(G(k))
\int_{\T^d}\widetilde u_k(t,x)r_k(t,x)\,dx.
\]
Moreover, summation by parts gives
\begin{equation}\label{eq:weighted-abel}
\frac12\frac d{dt}E_F(t)
=
\sum_{n\geq1}
\bigl(F(G(n))-F(G(n+1))\bigr)
\sum_{k=1}^n
\int_{\T^d}\widetilde u_k r_k\,dx.
\end{equation}
We will use~\eqref{eq:weighted-abel} to take advantage of the decay of
\(F(G(n))-F(G(n+1))\). To estimate the partial sums
\(\sum_{k=1}^n \int_{\T^d}\widetilde u_k r_k\,dx\), we first analyze the frequency interactions arising from the Littlewood-Paley decomposition. From the explicit expression for \(r_k\) and the fact that \(\operatorname{div}b=0\), we have
\[
\int_{\T^d}\widetilde u_k r_k\,dx
=
-
\int_{\T^d}\int_{\R^d}
\widetilde u_k(x)u(x-y)b(x-y)\cdot\nabla\theta_k(y)\,dy\,dx.
\]
Since \(\widetilde u_k=u*\theta_k\) and \(\theta_k\) is even, we may move the
convolution with \(\theta_k\) onto the second factor. Summing over \(k\) and using
\[
\sum_{k=1}^n\nabla(\theta_k*\theta_k)
=
\sum_{k=1}^n\nabla\varphi_k
=
\nabla\psi_n,
\]
we obtain
\[
\sum_{k=1}^n\int_{\T^d}\widetilde u_k r_k\,dx
=
-
\int_{\T^d}\int_{\R^d}
u(x)u(x-y)b(x-y)\cdot\nabla\psi_n(y)\,dy\,dx.
\]
We now decompose \(u\) and \(b\) in Littlewood-Paley blocks
\[
u=\sum_{j\geq1}u_j,
\qquad
b-b_0=\sum_{m\geq1}b_m,
\]
and we note that the constant term \(b_0=\fint b\) does not affect the estimates. After the change of variables \((x,y)\mapsto(x-y,-y)\) and symmetrizing in
\(j\) and \(\ell\), we are reduced to estimating, for fixed
\(n,j,m,\ell\geq1\), the interaction
\begin{equation}\label{eq:weighted-interaction}
\frac{1}{2}\int_{\T^d}
b_m\cdot
\bigl[
u_\ell\nabla(\psi_n*u_j)
+
u_j\nabla(\psi_n*u_\ell)
\bigr]\,dx.
\end{equation}
The expression is symmetric in \(j\) and \(\ell\), so we may restrict to
\(j\leq\ell\), at the cost of a factor \(2\).
By construction, we have
\[
\psi_n*u_k=
\begin{cases}
u_k, & k\leq n-1,\\
0, & k\geq n+2.
\end{cases}
\]
Since \(j\leq\ell\), if \(n\leq j-2\), both terms in~\eqref{eq:weighted-interaction} vanish. If \(n\geq\ell+1\),
then \(\psi_n\) acts as the identity on both \(u_j\) and \(u_\ell\), and hence
\[
\int_{\T^d}
b_m\cdot
\left(
u_\ell\nabla u_j+u_j\nabla u_\ell
\right)\,dx
=
\int_{\T^d}b_m\cdot\nabla(u_ju_\ell)\,dx
=
0,
\]
since \(\operatorname{div}b_m=0\). Therefore, a nonzero interaction can occur only if \(j-1\leq n\leq\ell\).

We now compare the three indices \(j,m,\ell\). Since
\[
\operatorname{spt}\widehat{u_j}
\subset
B_{2^j}(0)\setminus\overline{B_{2^{j-2}}(0)},
\]
with the analogous inclusions for \(b_m\) and \(u_\ell\), the interaction
vanishes if one of \(j,m,\ell\) exceeds the other two by at least \(3\).
Indeed, in that case the frequency carried by the largest block cannot be
canceled by the frequencies carried by the other two. Thus the two largest
indices among \(j,m,\ell\) differ by at most \(2\).

Since \(j\leq\ell\), we divide the interactions according to the relative
position of \(m\) and \(j\). If \(m\leq j-2\), the two largest indices are \(j\)
and \(\ell\), so the restriction above gives \(\ell\leq j+2\). If
\(m\geq j-1\), the same restriction gives \(|\ell-m|\leq3\). Thus all
potentially nonzero interactions fall into the following two regimes:
\[
\begin{array}{lll}
I: & m\leq j-2, & j\leq\ell\leq j+2,\\[2mm]
II: & m\geq j-1, & |\ell-m|\leq3.
\end{array}
\]
It remains to estimate the following two contributions:
\begin{align*}
I
:={}&
\sum_{j\geq3}
\sum_{\ell=j}^{j+2}
\sum_{n=j-1}^{\ell}
\left|
F(G(n))-F(G(n+1))
\right|\\
&\quad\times
\left|
\int_{\T^d}
b_{j-2}^{\leq}\cdot
\bigl[
u_\ell\nabla(\psi_n*u_j)
+
u_j\nabla(\psi_n*u_\ell)
\bigr]\,dx
\right|,
\end{align*}
and
\begin{align*}
II
:={}&
\sum_{j\geq1}
\sum_{m\geq\max\{1,j-1\}}
\sum_{\substack{\ell\geq j\\|\ell-m|\leq3}}
\sum_{n=\max\{1,j-1\}}^{\ell}
\left|
F(G(n))-F(G(n+1))
\right|\\
&\quad\times
\left|
\int_{\T^d}
b_m\cdot
\bigl[
u_\ell\nabla(\psi_n*u_j)
+
u_j\nabla(\psi_n*u_\ell)
\bigr]\,dx
\right|.
\end{align*}
\subsection{Proof of Theorem~\ref{renormalizationosgood}}
We now prove uniqueness and renormalization for solutions of the transport equation.
\begin{proof}
    For the first choice, let \(F(x)=\chi(x/R)\), where \(R>0\) and \(\chi\) is smooth, with \(\chi=1\) on \([0,1]\) and \(\chi=0\) on \([2,\infty)\). Since \(\omega\) is Osgood, \(G(k)\to\infty\) as \(k\to\infty\), so for every fixed \(R\) the sum defining \(E_F\) has only finitely many nonzero terms.
By using the mean value theorem and~\eqref{incrementG} we have
\begin{equation}\label{eq:energy-cutoff-difference}
\left|
\chi\left(\frac{G(n)}R\right)
-
\chi\left(\frac{G(n+1)}R\right)
\right|
\lesssim
\frac1R\frac{2^{-n}}{\omega(2^{-n})}.
\end{equation}
We will also use the following square-function bounds:
\begin{equation}\label{eq:energy-square-function-bounds}
\begin{aligned}
&
\left\|
\left(\sum_{k\geq1}|M(u_k)|^2\right)^{1/2}
\right\|_{L^{2p'}}
+
\left\|
\left(\sum_{k\geq1}|u_k|^2\right)^{1/2}
\right\|_{L^{2p'}}
\\
&\qquad+
\left\|
\left(\sum_{k\geq1}2^{-2k}|\nabla u_k|^2\right)^{1/2}
\right\|_{L^{2p'}}
\lesssim
\|u(t)\|_{L^{2p'}}
\lesssim
\|u\|_{L^\infty((0,T)\times\T^d)}.
\end{aligned}
\end{equation}
The first two terms are controlled by the Fefferman-Stein inequality~\eqref{FSvectorineq} and the Littlewood-Paley estimate~\eqref{LPtheorem}. For the third, observe that
\(
2^{-k}\nabla u_k
=
u*(2^{-k}\nabla\varphi_k),
\)
and apply~\eqref{LPupperbound}, since the kernels
\(2^{-k}\nabla\varphi_k\) are dyadic rescalings of a fixed mean-zero Schwartz
kernel. We start by bounding \(I\). By the fundamental theorem of calculus we have
\[
b_{j-2}^{\leq}(x)-b_{j-2}^{\leq}(x-y)
=
\int_0^1
y\cdot\nabla b_{j-2}^{\leq}(x-sy)\,ds.
\]
Using~\eqref{eq:maximaltraslate} and the fact that \(|n-j|\leq2\), we obtain
\begin{align*}
&\left|
\int_{\T^d}\int_{\R^d}
u_\ell(x)u_j(x-y)
\bigl(
b_{j-2}^{\leq}(x)-b_{j-2}^{\leq}(x-y)
\bigr)\cdot\nabla\psi_n(y)\,dy\,dx
\right|\\
&\qquad\lesssim
\int_{\T^d}
|u_\ell(x)|\,M(u_j)(x)\,
M\bigl(|\nabla b_{j-2}^{\leq}|\bigr)(x)
\int_{\R^d}
(1+2^n|y|)^{2N}|y|\,|\nabla\psi_n(y)|\,dy\,dx\\
&\qquad\lesssim
\int_{\T^d}
M(u_j)\,
M\bigl(|\nabla b_{j-2}^{\leq}|\bigr)\,
|u_\ell|\,dx,
\end{align*}
where in the last step we used the weighted kernel estimate. Using~\eqref{eq:energy-cutoff-difference} and~\eqref{eq:adjacentomega}, and summing over \(n\) and \(\ell\), we obtain
\begin{align*}
I
\lesssim
\frac1R
\int_{\T^d}
\sup_{j\geq3}
M\left(
\frac{2^{-j}}{\omega(2^{-j})}
|\nabla b_{j-2}^{\leq}|
\right)
\sum_{j\geq3}
M(u_j)
\sum_{\ell=j}^{j+2}|u_\ell|
\,dx.
\end{align*}
By Cauchy-Schwarz in \(j\) and the bounded overlap of the ranges
\(j\leq\ell\leq j+2\),
\[
\sum_{j\geq3}
M(u_j)\sum_{\ell=j}^{j+2}|u_\ell|
\lesssim
\left(\sum_{k\geq1}|M(u_k)|^2\right)^{1/2}
\left(\sum_{k\geq1}|u_k|^2\right)^{1/2}.
\]
Moreover,
\[
\sup_{j\geq3}
M\left(
\frac{2^{-j}}{\omega(2^{-j})}
|\nabla b_{j-2}^{\leq}|
\right)
\leq
M\left(
\sup_{j\geq3}
\frac{2^{-j}}{\omega(2^{-j})}
|\nabla b_{j-2}^{\leq}|
\right).
\]
Hence, by the \(L^p\) boundedness of \(M\),~\eqref{eq:adjacentomega}, and
Lemma~\ref{hajlaszdyadicbounds},
\[
\left\|
\sup_{j\geq3}
M\left(
\frac{2^{-j}}{\omega(2^{-j})}
|\nabla b_{j-2}^{\leq}|
\right)
\right\|_{L^p}
\lesssim
\|b(t,\cdot)\|_{\dot M^{\omega,p}}.
\]
Combining these estimates with Hölder's inequality and~\eqref{eq:energy-square-function-bounds}, we obtain
\[
I
\lesssim
\frac1R
\|b(t,\cdot)\|_{\dot M^{\omega,p}}
\|u\|_{L^\infty((0,T)\times\T^d)}^2.
\]
We now prove the same estimate for \(II\). Integrating the second term in~\eqref{eq:weighted-interaction} by parts and using
\(\operatorname{div}b_m=0\), we obtain
\[
\int_{\T^d}
b_m\cdot
\bigl[
u_\ell\nabla(\psi_n*u_j)
-
(\psi_n*u_\ell)\nabla u_j
\bigr]\,dx.
\]
Since \(n\geq j-1\), we have
\[
|\nabla(\psi_n*u_j)|
\lesssim
2^jM(u_j).
\]
Moreover, \(\psi_n*u_\ell\) vanishes unless \(n=\ell-1\) or \(n=\ell\),
and in these two cases we have
\[
|\psi_n*u_\ell|
\lesssim
M(u_\ell).
\]
Therefore,
\begin{align}\label{eq:weighted-case-II-interaction}
&\left|
\int_{\T^d}
b_m\cdot
\bigl[
u_\ell\nabla(\psi_n*u_j)
-
(\psi_n*u_\ell)\nabla u_j
\bigr]\,dx
\right|\notag\\
&\qquad\lesssim
2^j
\int_{\T^d}
|b_m|
\left(
|u_\ell|M(u_j)
+
M(u_\ell)2^{-j}|\nabla u_j|
\right)\,dx.
\end{align}
Before putting everything together we now sum the cutoff differences. Since \(|\ell-m|\leq3\), by~\eqref{eq:energy-cutoff-difference},
\begin{align*}
&2^j
\sum_{n=\max\{1,j-1\}}^\ell
\left|
\chi\left(\frac{G(n)}R\right)
-
\chi\left(\frac{G(n+1)}R\right)
\right|\\
&\qquad\lesssim
\frac1{R\omega(2^{-m})}
\sum_{n=\max\{1,j-1\}}^{m+3}
2^{j-n}
\frac{\omega(2^{-m})}{\omega(2^{-n})}.
\end{align*}
For \(n\leq m\),~\eqref{omegabounds} gives
\[
\frac{\omega(2^{-m})}{\omega(2^{-n})}
\lesssim
2^{-\beta(m-n)}.
\]
Hence
\[
\sum_{n=\max\{1,j-1\}}^m
2^{j-n}
\frac{\omega(2^{-m})}{\omega(2^{-n})} \lesssim 2^{-\beta(m-j)}
\sum_{n=\max\{1,j-1\}}^m
2^{-(1-\beta)(n-j)}
\lesssim
2^{-\frac{\beta}{2}(m-j)}.
\]
The terms \(n=m+1,m+2,m+3\) satisfy the same bound, since the weights are comparable by~\eqref{eq:adjacentomega}. We thus have
\[
2^j
\sum_{n=\max\{1,j-1\}}^\ell
\left|
\chi\left(\frac{G(n)}R\right)
-
\chi\left(\frac{G(n+1)}R\right)
\right|
\lesssim
\frac1R
\frac{2^{-\frac{\beta}{2}(m-j)}}{\omega(2^{-m})}.
\]
Using
\(
\frac{|b_m|}{\omega(2^{-m})}
\leq
\sup_{k\geq1}\frac{|b_k|}{\omega(2^{-k})},
\)
the preceding estimates give
\begin{align*}
II
\lesssim{}&
\frac1R
\int_{\T^d}
\sup_{k\geq1}\frac{|b_k|}{\omega(2^{-k})}
\sum_{m\geq1}
\left(
\sum_{\substack{\ell\geq1\\|\ell-m|\leq3}}
|u_\ell|
\right)
\left(
\sum_{j=1}^{m+1}
2^{-\frac{\beta}{2}(m-j)}M(u_j)
\right)\,dx\\
&+
\frac1R
\int_{\T^d}
\sup_{k\geq1}\frac{|b_k|}{\omega(2^{-k})}
\sum_{m\geq1}
\left(
\sum_{\substack{\ell\geq1\\|\ell-m|\leq3}}
M(u_\ell)
\right)
\left(
\sum_{j=1}^{m+1}
2^{-\frac{\beta}{2}(m-j)}
2^{-j}|\nabla u_j|
\right)\,dx.
\end{align*}
Since
\(
2^{-\frac{\beta}{2}r}\mathbf{1}_{\{r\geq-1\}}\in\ell^1,
\)
Cauchy-Schwarz in \(m\), the bounded overlap of the ranges
\(|\ell-m|\leq3\), and Young's convolution inequality on \(\ell^2\) give
\begin{align*}
II
\lesssim
\frac1R
\int_{\T^d}
\sup_{k\geq1}\frac{|b_k|}{\omega(2^{-k})}
\left(
\sum_{k\geq1}|M(u_k)|^2
\right)^{1/2}
\left[
\left(
\sum_{k\geq1}|u_k|^2
\right)^{1/2}
+
\left(
\sum_{k\geq1}2^{-2k}|\nabla u_k|^2
\right)^{1/2}
\right]\,dx.
\end{align*}
Lemma~\ref{hajlaszdyadicbounds}, Hölder's inequality, and~\eqref{eq:energy-square-function-bounds} then give
\[
II
\lesssim
\frac1R
\|b(t,\cdot)\|_{\dot M^{\omega,p}}
\|u\|_{L^\infty((0,T)\times\T^d)}^2.
\]
Combining the estimates for the two cases, we obtain, for almost every
\(t\in(0,T)\),
\[
\left|\frac d{dt}E_R(t)\right|
\lesssim
\frac1R
\|b(t,\cdot)\|_{\dot M^{\omega,p}}
\|u\|_{L^\infty((0,T)\times\T^d)}^2.
\]
Integrating in time gives
\[
|E_R(t)-E_R(0)|
\lesssim
\frac{\|u\|_{L^\infty((0,T)\times\T^d)}^2}{R}
\int_0^t\|b(r,\cdot)\|_{\dot M^{\omega,p}}\,dr
\]
for every \(t\in[0,T]\). Since \(\int_0^T \|b(t,\cdot)\|_{\dot M^{\omega,p}}dt<\infty\,\), the right-hand
side tends to zero as \(R\to\infty\). Since the weights converge pointwise to \(1\) and are uniformly bounded,
dominated convergence and the identity
\[
\sum_{k\geq1}\|\widetilde u_k\|_{L^2(\T^d)}^2
=
\|u\|_{L^2(\T^d)}^2
\]
give, by sending \(R\) to infinity,
\[
\|u(t)\|_{L^2(\T^d)}
=
\|u_0\|_{L^2(\T^d)}
\]
for every \(t\in[0,T]\). Since the difference of two solutions is again a solution and its \(L^2\) norm is preserved, uniqueness follows. The renormalization property can be deduced from uniqueness in several ways; see, for example,~\cite{BouchutCrippa2006}.

\end{proof}
\subsection{Proof of Theorem~\ref{regularityosgood}}
We now prove propagation of \(B^{\omega,a}\) regularity for bounded solutions of the transport equation. The proof is a modification of the preceding argument, leading to a Gronwall-type estimate similar to that of Meyer and Seis~\cite{meyer2024propagation}.

We will need the following simple equivalence between the weighted energies
associated with the Littlewood-Paley blocks and with the quadratic
Littlewood-Paley blocks. 
\begin{lem}\label{lem:quadratic-weighted-equivalence}
Let \(a\geq0\). For every integer \(N\geq1\) and every mean-free function
\(u\in L^2(\T^d)\), one has
\[
\sum_{k\geq1}
G(k\wedge N)^{2a}\|u_k\|_{L^2}^2
\sim
\sum_{k\geq1}
G(k\wedge N)^{2a}\|\widetilde u_k\|_{L^2}^2,
\]
where \(k\wedge N:=\min\{k,N\}\), and the implicit constants are independent of \(N\) and \(u\).
\end{lem}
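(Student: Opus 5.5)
Both sides will be written on the Fourier side with Parseval, reducing the claim to a pointwise comparison of two Fourier multipliers on \(\mathbb Z^d\setminus\{0\}\). Put \(w_k:=G(k\wedge N)^{2a}\). Since \(\widehat{u_k}=\widehat{\varphi_k}\,\widehat u\) and \(\widehat{\widetilde u_k}=\vartheta_k\,\widehat u\), with \(\vartheta_k^2=\widehat{\varphi_k}\), we have
\[
\sum_{k\ge1}w_k\|u_k\|_{L^2}^2=\sum_{\xi\ne0}|\widehat u(\xi)|^2\sum_{k\ge1}w_k\,\widehat{\varphi_k}(\xi)^2,
\qquad
\sum_{k\ge1}w_k\|\widetilde u_k\|_{L^2}^2=\sum_{\xi\ne0}|\widehat u(\xi)|^2\sum_{k\ge1}w_k\,\widehat{\varphi_k}(\xi).
\]
Mean-freeness removes \(\xi=0\). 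It therefore suffices to show that \(A(\xi):=\sum_k w_k\widehat{\varphi_k}(\xi)^2\) and \(B(\xi):=\sum_k w_k\widehat{\varphi_k}(\xi)\) are comparable for each \(|\xi|\ge1\), with constants independent of \(\xi\) and \(N\).

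Fix \(\xi\) with \(|\xi|\ge1\). By \eqref{eq:dyadic_support}, the indices \(k\) with \(\widehat{\varphi_k}(\xi)\ne0\) satisfy \(2^{k-2}<|\xi|<2^k\). Hence there are at most two of them, and they are consecutive, say \(k_0\) and possibly \(k_0+1\). By Lemma~\ref{lemma gk+1}, \(G(k)\sim G(k+1)\) uniformly in \(k\), so \(G(k\wedge N)\sim G((k+1)\wedge N)\); the truncation at \(N\) either leaves both values equal to \(G(N)\) or keeps them adjacent. For the relevant \(k\) this gives \(w_k\sim w_{k_0}\), with constants depending only on \(a\) and \(G(1)/G\) (in fact \(G(k+1)/G(k)\le 1+G(1)/G(1)=2\)). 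Thus \(A(\xi)\sim w_{k_0}\sum_k\widehat{\varphi_k}(\xi)^2\) and \(B(\xi)\sim w_{k_0}\sum_k\widehat{\varphi_k}(\xi)\).

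It remains to compare \(\sum_k\widehat{\varphi_k}(\xi)^2\) with \(\sum_k\widehat{\varphi_k}(\xi)\). The second sum equals \(1\) for \(|\xi|\ge1\) by the telescoping identity. Since \(0\le\widehat{\varphi_k}\le1\), the first sum is at most the second. Since at most two terms are nonzero and they sum to \(1\), Cauchy--Schwarz gives \(\sum_k\widehat{\varphi_k}^2\ge\frac12\bigl(\sum_k\widehat{\varphi_k}\bigr)^2=\frac12\). So \(A\sim B\) pointwise, and the lemma follows after summing against \(|\widehat u(\xi)|^2\).

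The main point to watch is the uniformity of the weight comparison across the cutoff \(k\wedge N\) and for small \(k\). It requires \(G(1)>0\), which holds because \(\omega\) is positive on \((0,1]\). It also requires \(G(k\wedge N)\) for adjacent \(k\) to be comparable even at \(k=N\), where one value is \(G(N)\) and the other is \(G(N)\) or \(G(N-1)\); Lemma~\ref{lemma gk+1} covers both cases. Everything else is bookkeeping with Parseval.
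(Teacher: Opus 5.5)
Your proposal is correct and follows essentially the same route as the paper. Parseval reduces the claim to a pointwise comparison, uniform in $\xi$ and $N$, of the multipliers $\sum_k G(k\wedge N)^{2a}\widehat{\varphi_k}(\xi)^2$ and $\sum_k G(k\wedge N)^{2a}\widehat{\varphi_k}(\xi)$, and this holds because at most two consecutive $\widehat{\varphi_k}(\xi)$ are nonzero, they are nonnegative and sum to one, and adjacent truncated weights are comparable by Lemma~\ref{lemma gk+1}. Your explicit constants, $G(k+1)\le 2G(k)$ and the Cauchy--Schwarz bound $\sum_k\widehat{\varphi_k}(\xi)^2\ge\frac12$, simply spell out what the paper leaves implicit.
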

\begin{proof}
By Lemma~\ref{lemma gk+1},
\(
G(k\wedge N)\sim G((k+1)\wedge N)
\)
uniformly in \(k\) and \(N\). Moreover, Parseval's identity gives
\begin{align*}
\sum_{k\geq1}
G(k\wedge N)^{2a}\|u_k\|_{L^2}^2
&=
\sum_{\xi\in\Z^d\setminus\{0\}}
|\widehat u(\xi)|^2
\sum_{k\geq1}
G(k\wedge N)^{2a}\widehat{\varphi_k}(\xi)^2,
\\
\sum_{k\geq1}
G(k\wedge N)^{2a}\|\widetilde u_k\|_{L^2}^2
&=
\sum_{\xi\in\Z^d\setminus\{0\}}
|\widehat u(\xi)|^2
\sum_{k\geq1}
G(k\wedge N)^{2a}\widehat{\varphi_k}(\xi).
\end{align*}
For every \(\xi\in\mathbb Z^d\setminus\{0\}\), at most two consecutive
multipliers \(\widehat{\varphi_k}(\xi)\) are nonzero. Since they are
nonnegative, sum to one, and the corresponding weights are comparable, it follows that
\[
\sum_{k\geq1}
G(k\wedge N)^{2a}\widehat{\varphi_k}(\xi)^2
\sim
\sum_{k\geq1}
G(k\wedge N)^{2a}\widehat{\varphi_k}(\xi)
\]
uniformly in \(\xi\) and \(N\).
\end{proof}
We will also need the following endpoint version of the interpolation estimate
in~\cite[Lemma~2(c)]{meyer2024propagation}, adapted to the weights
\(G(k\wedge N)\) and restricted to the two families of Littlewood-Paley
multipliers appearing in our estimates.
\begin{prop}\label{prop:interpolation}
Let \(p\in(1,\infty)\), let \(\frac12<a\leq\frac p2\), and let \(N\geq1\).
Then, for every mean-free function \(u\in L^\infty(\T^d)\),
\begin{align*}
&
\left\|
\left(
\sum_{k\geq1}
G(k\wedge N)^{2a-1}|u_k|^2
\right)^{1/2}
\right\|_{L^{2p'}}
+
\left\|
\left(
\sum_{k\geq1}
G(k\wedge N)^{2a-1}
2^{-2k}|\nabla u_k|^2
\right)^{1/2}
\right\|_{L^{2p'}}
\notag\\
&\qquad\lesssim
\|u\|_{L^\infty}^{\frac1{2a}}
\left(
\sum_{k\geq1}
G(k\wedge N)^{2a}
\|u_k\|_{L^2}^2
\right)^{\frac12-\frac1{4a}},
\end{align*}
where the implicit constant is independent of \(N\) and \(u\).
\end{prop}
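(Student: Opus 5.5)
The plan is a pointwise Hölder inequality in the dyadic index $k$, followed by Hölder's inequality in $x$. The exponent $\frac12-\frac1{4a}$ and the constraint $a\le \frac p2$ suggest exactly this. Write $w_k:=G(k\wedge N)$ and set
\[
\theta:=\frac{2a-1}{2a}\in(0,1),
\]
which is positive since $a>\frac12$. Split the summand as $w_k^{2a-1}|u_k|^2=(w_k^{2a}|u_k|^2)^{\theta}(|u_k|^2)^{1-\theta}$. Hölder's inequality in $k$ with exponents $1/\theta$ and $1/(1-\theta)$ then gives the pointwise bound
\[
S(x):=\Bigl(\sum_{k\ge1} w_k^{2a-1}|u_k|^2\Bigr)^{1/2}\le A(x)^{\theta/2}B(x)^{(1-\theta)/2},
\]
where $A:=\sum_k w_k^{2a}|u_k|^2$ and $B:=\sum_k|u_k|^2$. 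The key features are that $\int_{\T^d}A\,dx$ is exactly the weighted energy on the right-hand side, while $B^{1/2}$ is the unweighted square function, which is controlled in every $L^q$, $q<\infty$, by $\|u\|_{L^q}\le\|u\|_{L^\infty}$ via \eqref{LPtheorem}.

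Next, integrate. We have $\int S^{2p'}\le\int A^{\theta p'}B^{(1-\theta)p'}$. Apply Hölder in $x$ with exponents $1/(\theta p')$ and $1/(1-\theta p')$. This is admissible exactly when $\theta p'\le1$, i.e. $(2a-1)p'\le 2a$, which unwinds to $a\le\frac p2$. For $a<\frac p2$ we get
\[
\|S\|_{L^{2p'}}\le \|A\|_{L^1}^{\theta/2}\,\bigl\|B^{1/2}\bigr\|_{L^q}^{1-\theta},
\qquad q=\frac{2(1-\theta)p'}{1-\theta p'}<\infty .
\]
Then \eqref{LPtheorem} gives $\|B^{1/2}\|_{L^q}\lesssim\|u\|_{L^\infty}$. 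The exponents come out as $1-\theta=\frac1{2a}$ on $\|u\|_{L^\infty}$ and $\theta/2=\frac12-\frac1{4a}$ on the energy, as claimed. The constants do not depend on $N$, since the weights enter only through $A$. However, the constant coming from \eqref{LPtheorem} in $L^q$ blows up as $a\to\frac p2$.

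For the gradient term the argument is identical with $u_k$ replaced by $2^{-k}\nabla u_k=u*(2^{-k}\nabla\varphi_k)$. The unweighted square function is then bounded by \eqref{LPupperbound}. The weighted part $\int\sum_k w_k^{2a}2^{-2k}|\nabla u_k|^2$ is bounded by $\sum_k w_k^{2a}\|u_k\|_{L^2}^2$ through Bernstein's inequality, using the frequency support \eqref{eq:dyadic_support}; this holds up to the harmless change $u_k\mapsto u*(\varphi_{k-1}+\varphi_k+\varphi_{k+1})$ and the comparability $w_k\sim w_{k\pm1}$ from Lemma~\ref{lemma gk+1}. Lemma~\ref{lem:quadratic-weighted-equivalence} is not needed here, but it can be used afterwards to pass to the quadratic blocks.

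The main obstacle is the endpoint $a=\frac p2$, i.e. $\theta p'=1$. There the scheme above requires $B\in L^\infty$, and the square function of a bounded function is only in BMO, not $L^\infty$. My first attempt would be a decomposition of the weight: write $w_k^{2a-1}\sim\sum_{j\le k}c_j$ with $c_j\approx (G(j+1)-G(j))G(j)^{2a-2}$ by \eqref{incrementG}. This gives $S^2\sim\sum_j c_j\sum_{k\ge j}|u_k|^2$, and the tail square functions $\sum_{k\ge j}|u_k|^2$ can be controlled by a good-$\lambda$/John--Nirenberg argument: $B^{1/2}$ is exponentially integrable with BMO norm $\lesssim\|u\|_{L^\infty}$. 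That should beat the merely $L^1$ factor $A$ after a layer-cake decomposition in the level sets of $A$. If this fails, I would fall back to a direct duality argument in the spirit of \cite[Lemma~2(c)]{meyer2024propagation}: test $S^2$ against $h\in L^{p}$ with $\|h\|_{L^p}\le1$ and use the Fefferman--Stein inequality \eqref{FSvectorineq} to move maximal functions onto the blocks $u_k$.
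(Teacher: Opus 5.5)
Your argument is correct for \(\frac12<a<\frac p2\), and there it takes a genuinely different, more elementary route than the paper. You use pointwise H\"older in \(k\), then H\"older in \(x\), then the unweighted square-function bound in \(L^q\), and you need no regularity of the weights at all. The paper instead invokes a Gagliardo--Nirenberg-type interpolation theorem for Triebel--Lizorkin spaces with admissible weight sequences, between \(F^0_{\infty,2}(\T^d)=\mathrm{BMO}\) and the weighted \(L^2\) energy. This yields an \(L^r\) bound with \(r=\frac{4a}{2a-1}\), hence \(L^{2p'}\) since \(r\ge 2p'\).

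The gap is the endpoint \(a=\frac p2\). It is part of the statement and is used in Theorem~\ref{regularityosgood}, and there \(r=2p'\) exactly, with no slack. You cannot reach it as a limit in \(a\), because your constant blows up through the \(L^q\) square-function bound with \(q\to\infty\). You also cannot pass to some \(\tilde p>p\), since that only gives the weaker \(L^{2\tilde p'}\) bound.

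The endpoint is in fact already lost in your first step, so no cleverer integration in \(x\) will recover it. At \(\theta p'=1\) you are left with \(\int A\,B^{1/(p-1)}\,dx\), and this is not bounded by \(C\|u\|_{L^\infty}^{2/(p-1)}\int A\,dx\). To see this, take a mean-free trigonometric polynomial \(v\) with \(\|v\|_{L^\infty}\le\frac12\) and \(\sum_k|v_k|^2>\Lambda\) on an open set \(E\). This is possible because the square function maps \(L^\infty\) into \(\mathrm{BMO}\) but not into \(L^\infty\). Add \(\eta\,\rho(x)\cos(2\pi 2^K x_1)\), where \(\rho\) is a bump supported well inside \(E\) and \(\eta\le\frac12\). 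Choose \(K\le N\) so large that \(G(K)^{2a}\) dwarfs the weights seen by the spectrum of \(v\). Then \(\int A\) is dominated by the contribution near the support of \(\rho\), where \(B\gtrsim\Lambda\), so \(\int A\,B^{1/(p-1)}\gtrsim\Lambda^{1/(p-1)}\int A\) with \(\|u\|_{L^\infty}\le1\).

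The same example rules out your first proposed fix. Exponential integrability of \(B\) cannot be paired against \(A\), which is only in \(L^1\) (\(\exp L\) is dual to \(L\log L\), not to \(L^1\)), and \(A\) can concentrate exactly where \(B\) is large. Your duality fallback is not carried out, so it cannot be assessed. What is needed is an argument that uses the scale-localized Carleson information in \(\|u\|_{\mathrm{BMO}}\lesssim\|u\|_{L^\infty}\), namely control of \(|Q|^{-1}\int_Q\sum_{2^{-k}\le\ell(Q)}|u_k|^2\) uniformly over cubes \(Q\), rather than the global size of \(B\). This is precisely what the \(F^0_{\infty,2}=\mathrm{BMO}\) endpoint of the interpolation theorem quoted in the paper supplies.
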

\begin{proof}
The proof follows by combining a few technical results on
Triebel-Lizorkin spaces. Let \(r:=4a/(2a-1)\). By
Lemma~\ref{lemma gk+1}, the sequence
\(\{G(k\wedge N)^a\}_{k\geq1}\) is admissible in the sense of
Farkas-Leopold~\cite{FarkasLeopold2006}, uniformly in \(N\). We now apply the torus analogue of the interpolation result in~\cite[Theorem~4.10 and Remark~4.13]{Drihem2023} with interpolation
parameter \(1/(2a)\). Moreover,
\(F^0_{\infty,2}(\T^d)=\operatorname{BMO}(\T^d)\), see~\cite[Theorem~2, p.~93, and Chapter~9]{Triebel1983}, and
\(\|u\|_{\operatorname{BMO}}\lesssim\|u\|_{L^\infty}\). Thus, we obtain
\[
\left\|\left(\sum_{k\geq1}G(k\wedge N)^{2a-1}|u_k|^2\right)^{1/2}\right\|_{L^r}
\lesssim
\|u\|_{L^\infty}^{\frac1{2a}}
\left(\sum_{k\geq1}G(k\wedge N)^{2a}\|u_k\|_{L^2}^2\right)^{\frac12-\frac1{4a}}.
\]
On the other hand, by~\eqref{eq:maximaltraslate}, we have
\(2^{-k}|\nabla u_k|\lesssim M(u_k)\). Hence, by the Fefferman-Stein
inequality,
\begin{align*}
\left\|
\left(
\sum_{k\geq1}
G(k\wedge N)^{2a-1}
2^{-2k}|\nabla u_k|^2
\right)^{1/2}
\right\|_{L^r}
&\lesssim
\left\|
\left(
\sum_{k\geq1}
G(k\wedge N)^{2a-1}|M(u_k)|^2
\right)^{1/2}
\right\|_{L^r}
\\
&\lesssim
\left\|
\left(
\sum_{k\geq1}
G(k\wedge N)^{2a-1}|u_k|^2
\right)^{1/2}
\right\|_{L^r}.
\end{align*}
Finally, since \(a\leq p/2\), one has \(r\geq2p'\), and the conclusion
follows.

\end{proof}

\begin{proof}[Proof of Theorem~\ref{regularityosgood}]
We fix an integer \(N\geq1\) and consider the truncated weighted energy
\[
E_N(t)
:=
\sum_{k\geq1}
G(k\wedge N)^{2a}
\|\widetilde u_k(t)\|_{L^2}^2.
\]
The weight is constant for \(k\geq N\). Using the conservation of the
\(L^2\) norm proved above, we may write
\[
E_N(t)
=
G(N)^{2a}\|u(t)\|_{L^2}^2
+
\sum_{k=1}^{N-1}
\bigl(G(k)^{2a}-G(N)^{2a}\bigr)
\|\widetilde u_k(t)\|_{L^2}^2.
\]
The first term is constant in time, while the second is a finite sum.
Thus, as in the first part of the proof, the differentiation and the
summation by parts argument are justified, and we obtain
\[
\frac12\frac d{dt}E_N(t)
=
\sum_{n\geq1}
\left(
G(n\wedge N)^{2a}
-
G((n+1)\wedge N)^{2a}
\right)
\sum_{k=1}^n
\int_{\T^d}\widetilde u_k r_k\,dx.
\]
As in the previous proof, we will need estimates for the differences between consecutive weights and for their ratios at different scales. By the mean value theorem, Lemma~\ref{lemma gk+1}, and~\eqref{incrementG}, we have, for every \(n\geq1\),
\[
\left|
G(n\wedge N)^{2a}
-
G((n+1)\wedge N)^{2a}
\right|
\lesssim
G(n\wedge N)^{2a-1}
\frac{2^{-n}}{\omega(2^{-n})}.
\]
We will also need to bound the ratio of the weights at different scales. By iterating Lemma~\ref{lemma gk+1}, we have, uniformly in \(N\),
\[
\frac{G(m\wedge N)}{G(j\wedge N)}
\lesssim
1+m-j
\]
for all integers \(1\leq j\leq m\).
We first estimate \(I\). Combining the low-frequency interaction estimate
with the preceding bound on the differences of the weights, we obtain
\begin{align*}
I
\lesssim
\int_{\T^d}
\sup_{j\geq3}
M\left(
\frac{2^{-j}}{\omega(2^{-j})}
|\nabla b_{j-2}^{\leq}|
\right)
\sum_{j\geq3}
G(j\wedge N)^{2a-1}M(u_j)
\sum_{\ell=j}^{j+2}|u_\ell|
\,dx.
\end{align*}
By Cauchy-Schwarz in \(j\), the comparability of the weights at neighboring
scales, and the bounded overlap of the ranges \(j\leq\ell\leq j+2\), we have
\begin{align*}
&
\sum_{j\geq3}
G(j\wedge N)^{2a-1}M(u_j)
\sum_{\ell=j}^{j+2}|u_\ell|
\\
&\qquad\lesssim
\left(
\sum_{k\geq1}
G(k\wedge N)^{2a-1}|M(u_k)|^2
\right)^{1/2}
\left(
\sum_{k\geq1}
G(k\wedge N)^{2a-1}|u_k|^2
\right)^{1/2}.
\end{align*}
Therefore, by Hölder's inequality, the \(L^p\) boundedness of \(M\), the
Fefferman-Stein inequality,~\eqref{eq:adjacentomega}, and
Lemma~\ref{hajlaszdyadicbounds}, we obtain
\[
I
\lesssim
\|b(t,\cdot)\|_{\dot M^{\omega,p}}
\left\|
\left(
\sum_{k\geq1}
G(k\wedge N)^{2a-1}|u_k|^2
\right)^{1/2}
\right\|_{L^{2p'}}^2.
\]
We now turn to \(II\). To sum the differences of the weights, we argue in a similar way to the previous proof. Since \(|\ell-m|\leq3\), the preceding bound on the differences of the
weights gives
\begin{align*}
&2^j
\sum_{n=\max\{1,j-1\}}^\ell
\left|
G(n\wedge N)^{2a}
-
G((n+1)\wedge N)^{2a}
\right|\\
&\qquad\lesssim
\frac{1}{\omega(2^{-m})}
\sum_{n=\max\{1,j-1\}}^{m+3}
G(n\wedge N)^{2a-1}
\frac{\omega(2^{-m})}{\omega(2^{-n})}
2^{j-n}.
\end{align*}
Arguing as in the first part of the proof, and using the monotonicity of \(G\), we obtain
\[
2^j
\sum_{n=\max\{1,j-1\}}^\ell
\left|
G(n\wedge N)^{2a}
-
G((n+1)\wedge N)^{2a}
\right|
\lesssim
\frac{G(m\wedge N)^{2a-1}}{\omega(2^{-m})}
2^{-\frac{\beta}{2}(m-j)}.
\]
For \(m\geq j\), the preceding estimate on the ratio of the weights gives
\[
\left(
\frac{G(m\wedge N)}{G(j\wedge N)}
\right)^{a-\frac12}
\lesssim
(1+m-j)^{a-\frac12}
\lesssim
2^{\frac{\beta}{4}(m-j)}.
\]
Therefore,
\[
G(m\wedge N)^{2a-1}
2^{-\frac{\beta}{2}(m-j)}
\lesssim
G(m\wedge N)^{a-\frac12}
G(j\wedge N)^{a-\frac12}
2^{-\frac{\beta}{4}(m-j)}.
\]
The same estimate holds for \(m=j-1\), up to a change in the implicit
constant. Combining these estimates with~\eqref{eq:weighted-case-II-interaction}, and arguing as in the previous proof, we use
\(
2^{-\frac{\beta}{4}r}\mathbf{1}_{\{r\geq-1\}}
\in\ell^1(\mathbb{Z})
\)
together with Cauchy-Schwarz in \(m\), the bounded overlap of the ranges
\(|\ell-m|\leq3\), and Young's convolution inequality on \(\ell^2\) to obtain
\begin{align*}
II
\lesssim
\int_{\T^d}
&\sup_{k\geq1}
\frac{|b_k|}{\omega(2^{-k})}
\left(
\sum_{k\geq1}
G(k\wedge N)^{2a-1}|M(u_k)|^2
\right)^{1/2}\\
&\times
\left[
\left(
\sum_{k\geq1}
G(k\wedge N)^{2a-1}|u_k|^2
\right)^{1/2}
+
\left(
\sum_{k\geq1}
G(k\wedge N)^{2a-1}
2^{-2k}|\nabla u_k|^2
\right)^{1/2}
\right]\,dx.
\end{align*}
By the Fefferman-Stein inequality, Lemma~\ref{hajlaszdyadicbounds},
and Hölder's inequality, we conclude that
\begin{align*}
II
\lesssim{}&
\|b(t,\cdot)\|_{\dot M^{\omega,p}}
\left\|
\left(
\sum_{k\geq1}
G(k\wedge N)^{2a-1}|u_k|^2
\right)^{1/2}
\right\|_{L^{2p'}}\\
&\times
\left[
\left\|
\left(
\sum_{k\geq1}
G(k\wedge N)^{2a-1}|u_k|^2
\right)^{1/2}
\right\|_{L^{2p'}}
+
\left\|
\left(
\sum_{k\geq1}
G(k\wedge N)^{2a-1}
2^{-2k}|\nabla u_k|^2
\right)^{1/2}
\right\|_{L^{2p'}}
\right].
\end{align*}
If \(a>\frac12\), applying Proposition~\ref{prop:interpolation} and
Lemma~\ref{lem:quadratic-weighted-equivalence} to the estimates for \(I\) and
\(II\), we obtain
\[
\left|
\frac{d}{dt}E_N(t)
\right|
\lesssim
\|b(t,\cdot)\|_{\dot M^{\omega,p}}
\|u\|_{L^\infty}^{1/a}
E_N(t)^{1-\frac{1}{2a}}.
\]
If \(a=\frac12\), the weights appearing in the square functions are equal to
one, and hence~\eqref{eq:energy-square-function-bounds} gives
\[
\left|
\frac{d}{dt}E_N(t)
\right|
\lesssim
\|b(t,\cdot)\|_{\dot M^{\omega,p}}
\|u\|_{L^\infty}^2.
\]
Thus, the first estimate continues to hold also when \(a=\frac12\). Rewriting the differential inequality as
\[
\frac{d}{dt}E_N(t)^{\frac1{2a}}
\lesssim
\|b(t,\cdot)\|_{\dot M^{\omega,p}}
\|u\|_{L^\infty}^{\frac1a},
\]
and integrating in time, we obtain, by
Lemma~\ref{lem:quadratic-weighted-equivalence}, uniformly in \(N\),
\[
\left(
\sum_{k\geq1}
G(k\wedge N)^{2a}\|u_k(t)\|_{L^2}^2
\right)^{1/2}
\lesssim
\left(
\sum_{k\geq1}
G(k\wedge N)^{2a}\|u_k(0)\|_{L^2}^2
\right)^{1/2}
+
\|u\|_{L^\infty}
\left(
\int_0^t
\|b(r,\cdot)\|_{\dot M^{\omega,p}}\,dr
\right)^a.
\]
Letting \(N\to\infty\), we conclude that
\[
\|u(t)\|_{B^{\omega,a}}
\lesssim
\|u_0\|_{B^{\omega,a}}
+
\|u\|_{L^\infty}
\left(
\int_0^t
\|b(r,\cdot)\|_{\dot M^{\omega,p}}\,dr
\right)^a.
\]

\end{proof}

\printbibliography
\end{document}